\numberwithin{equation}{section}
\newcommand{\be}{\begin{equation}}
\newcommand{\ee}{\end{equation}}
\newcommand{\R}{\mathbb R}
\newcommand{\Z}{\mathbb Z}
\newcommand{\N}{\mathbb N}
\newcommand{\ep}{\varepsilon}
\renewcommand{\phi}{\varphi}
\newcommand{\co}{\colon}
\newcommand\norm{{\|\cdot\|}}
\DeclareMathOperator{\cl}{closure}
\DeclareMathOperator{\conv}{conv}
\newtheorem{theorem}{Theorem}
\newtheorem{lemma}{Lemma}[section]
\newtheorem*{problem}{Problem}
\theoremstyle{remark}
\newtheorem{remark}[lemma]{Remark}
\theoremstyle{definition}
\newtheorem{definition}[lemma]{Definition}
\begin{document}

\title{Uniform approximation of metrics by graphs}

\author{Dmitri Burago}                                                          
\address{Dmitri Burago: Pennsylvania State University,                          
Department of Mathematics, University Park, PA 16802, USA}                      
\email{burago@math.psu.edu}                                                     
                                                                                
\author{Sergei Ivanov}
\address{Sergei Ivanov:
St.Petersburg Department of Steklov Mathematical Institute,
Russian Academy of Sciences,
Fontanka 27, St.Petersburg 191023, Russia}
\email{svivanov@pdmi.ras.ru}

\thanks{The first author was partially supported                                
by NSF grant  DMS-1205597.
The second author was partially supported by
RFBR grant 11-01-00302-a.}

\subjclass[2010]{51K05, 05C12}

\keywords{Metric graph, Gromov-Hausdorff distance}

\begin{abstract}
We say that a metric graph is uniformly bounded if the degrees of all vertices are uniformly
bounded and the lengths of edges are pinched between two positive constants; a metric
space is approximable by a  uniform graph if there is one within a finite Gromov-Hausdorff
distance. We show that the Euclidean plane and Gromov hyperbolic geodesic spaces with
bounded geometry are approximable by uniform graphs, and pose a number of open problems.  
\end{abstract}

\maketitle

\section{Introduction}

In this paper we are concerned with graph approximations of Riemannian manifold as metric spaces. 
We will address problems of spectral approximation elsewhere. 

By a {\em metric graph} we mean an undirected graph whose edges are labeled by positive numbers
called {\em edge lengths}.
This naturally turns the set of vertices of the graph into a metric space
(where some distances may be infinite).
Namely one defines the length of a path in a metric graph as the sum of edge lengths
along the path, and then the distance $d_\Gamma(p,q)$ between vertices $p$ and $q$
of a metric graph $\Gamma$ is defined as the infimum of lengths of paths
connecting $p$ and~$q$.

We say that a metric graph is {\em uniform} if there are positive
constants $M$, $D$ and $\delta$ such that the degree of
every vertex is no greater than $M$ and the length of every edge is between $\delta$ and $D$.

We say that a metric space is is approximable by a uniform graph if there exists a uniform graph 
which is within finite Gromov--Hausdorff distance from the space.

For this paper, the reader does not need to even know what the Gromov--Hausdorff distance is.
We say that two metrics on the same set are {\em additively close} if there exists a constant $C$ such that the
difference of the two distances between every two points is at most $C$.
Also recall that a \textit{net} in metric space is a subset which is an $\ep$-net for some $\ep>0$
(for example $\Z^2$ is a net in $\R^2$). Now the definition can be reformulated in a more concrete way 
by means of the following trivial lemma:

\begin{lemma} 
A manifold is approximable by a uniform graph if and only if there exists a uniform graph whose 
vertices form a net 
in the manifold and whose distance function on the set of vertices is additively close to the 
restriction of  the distance function on the manifold.
\qed
\end{lemma}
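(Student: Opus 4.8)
The plan is to do nothing more than unwind the definition of $d_{GH}<\infty$, so I would first record the one standard fact I need: for a metric space $X$ and a metric graph $\Gamma$ one has $d_{GH}(X,\Gamma)<\infty$ if and only if there is a relation $R\subseteq X\times V(\Gamma)$ that projects onto both factors and has finite distortion $\operatorname{dis}(R):=\sup\{\,|d_X(x,x')-d_\Gamma(v,v')|:(x,v),(x',v')\in R\,\}$. (If one prefers, this may simply be taken as the working definition of finite Gromov--Hausdorff distance for the purposes of the paper.) With this in hand the lemma splits into two elementary translations; throughout, $\Gamma$ is assumed uniform with the constants $M,D,\delta$ of the definition above.

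For the "if" direction, suppose $\Gamma$ is a uniform graph whose vertex set $V$ is an $\varepsilon$-net in $X$ and satisfies $|d_\Gamma(v,v')-d_X(v,v')|\le C$ for all $v,v'\in V$. I would take $R=\{(x,v)\in X\times V:d_X(x,v)\le\varepsilon\}$: it projects onto $X$ because $V$ is a net and onto $V$ because each vertex is $\varepsilon$-close to itself, and by the triangle inequality $\operatorname{dis}(R)\le C+2\varepsilon<\infty$. Hence $d_{GH}(X,\Gamma)<\infty$, i.e.\ $X$ is approximable by a uniform graph.

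For the "only if" direction, suppose $d_{GH}(X,\Gamma)<\infty$ for some uniform $\Gamma$ and fix $R$ with $\operatorname{dis}(R)\le C<\infty$. Choose for each vertex $v$ a point $f(v)\in X$ with $(f(v),v)\in R$. This $f$ need not be injective, but if $f(v)=f(v')$ with $v\ne v'$ then $d_\Gamma(v,v')\le C$, so every fibre of $f$ lies in a $d_\Gamma$-ball of radius $C$ and is therefore finite by uniformity; since $V(\Gamma)$ is countable ($\Gamma$ being finite or, being at finite distance from the connected space $X$, connected) and a manifold has no isolated points, an arbitrarily small perturbation makes $f$ injective, and I absorb this by replacing $C$ with $C+1$. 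Now I transplant the metric-graph structure of $\Gamma$ onto $V':=f(V(\Gamma))\subseteq X$, joining $f(v)$ and $f(v')$ by an edge of the same length as the edge $vv'$ of $\Gamma$ whenever $vv'$ is an edge of $\Gamma$, and call the result $\Gamma'$. Then $\Gamma'$ is isomorphic to $\Gamma$ as a metric graph, so it is uniform and $d_{\Gamma'}(f(v),f(v'))=d_\Gamma(v,v')$. Its vertex set $V'$ is a net in $X$: given $x\in X$, pick $v$ with $(x,v)\in R$; since $(x,v)$ and $(f(v),v)$ both lie in $R$ we get $d_X(x,f(v))\le C$. Finally $d_{\Gamma'}$ is additively close to $d_X|_{V'}$, because $|d_{\Gamma'}(f(v),f(v'))-d_X(f(v),f(v'))|=|d_\Gamma(v,v')-d_X(f(v),f(v'))|\le C$ from $(f(v),v),(f(v'),v')\in R$. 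This is precisely the graph demanded by the right-hand side.

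There is no real obstacle here: every estimate is the triangle inequality together with the single bookkeeping constant $\operatorname{dis}(R)$. The only point that deserves a sentence is the possible non-injectivity of the tautological map $f$ in the last paragraph, which I dispose of by the small perturbation (using that a manifold has no isolated points); alternatively one could pass to the quotient metric graph on $f(V(\Gamma))$ and note that identifying points at $d_\Gamma$-distance $\le C$ changes distances by at most $2C$. I would present the argument in exactly the order above.
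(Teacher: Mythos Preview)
The paper treats this lemma as self-evident: it is stated with a \qed\ and no argument at all, so there is no ``paper's own proof'' to compare with. Your proposal is correct and is exactly the routine unpacking one would expect---using the correspondence characterization of finite Gromov--Hausdorff distance, transporting $\Gamma$ to its image in $X$ via a section of the correspondence, and tidying up the non-injectivity by a small perturbation (or a quotient).
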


The general question which remains widely open is the following problem.

\begin{problem} 
What complete (Riemannian or even Finsler) manifolds are approximable by uniform graphs? 
\end{problem}

Let us emphasize that we are interested in an approximation with a bounded
{\it additive} error.   Say, for $\R^2$, the question is trivial for quasi-isometries. Indeed, the standard grid
approximates the Euclidean metric up to a factor of $\sqrt 2$.
The question of making the same cone at infinity as that of the Euclidean plane
is more subtle
though it is still much easier (see~\cite{PPS} and references there). However let us
reiterate that we want the difference between distances be bounded.

We do not have a single example of a manifold with bounded geometry
 for which we can prove that it is 
not approximable by a uniform graph (one can easily construct such examples with
sectional curvature rapidly going to $-\infty$). However, so far we cannot even prove that $\R^3$ 
with its standard metric is approximable by a uniform graph.

If the answer is affirmative for a certain class of manifolds, one also wonders how the constants in the
definition of the uniform graph and the additive error $C$ depend on the class (dimension, injectivity 
radius and such). This makes this problem meaningful for compact manifolds. Formally, one can 
consider a disjoint union of manifolds from a certain class and ask if it is approximable by a uniform
graph. For instance, asking if the spheres are approximable by uniform graphs with the same 
constants is the same as asking if the disjoint union of spheres with integer radii is approximable 
by a uniform graph.

One can easily see that, if $\R^n$ is approximable by a uniform graph
then the graph can be chosen so that its vertices form the standard integer 
lattice and the degree of every vertex does not exceed three. 
We do not use this fact in the sequel,
so we give only a punchline of a (rather easy) argument. 
Modifying the graph to move
vertices to a lattice and reducing their degrees to at most three can be done in three steps. 
First, one can get rid of vertices of higher degree by replacing their neighborhoods by graphs
with controlled parameters. For each vertex, one just cuts all outgoing edges in the middle, removes the vertex 
and replaces it by an appropriate degree~3 metric graph.
It is crucial to make sure that all distances are exactly preserved.
Next, one chooses a very fine square lattice and moves each vertex to the nearest lattice
point. Lengths of edges stay the same.  Still, many points of the lattice are not occupied by vertices. 
To fix this, it is enough to add 
a whole bunch of vertices of degree two on existing edges and move them to lattice points, having made the
edges to pass through all lattice points. 
Finally, one rescales the lattice to get the integer one and multiplies the edge lengths
by the same factor.
Of course, the constant would deteriorate drastically, but 
one still gets a uniform approximation.

The main result of the paper is the following:

\begin{theorem}
\label{main}
There exists a
uniform metric graph $\Gamma$ whose set of
vertices is the standard lattice $\Z^2\subset\R^2$,
such that its distance function $d_\Gamma$ is
additively close to the standard Euclidean metric:
$$
 \bigl|d_\Gamma(p,q) - |p-q|\bigr| \le C
$$
for some constant $C>0$ and all $p,q\in\Z^2$.
\end{theorem}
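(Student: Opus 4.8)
By the lemma above it is enough to equip $\Z^2$ with a uniform graph structure whose distance function $d_\Gamma$ satisfies both $d_\Gamma(p,q)\le|p-q|+C$ and $d_\Gamma(p,q)\ge|p-q|-C$. Two remarks will guide the construction. First, any edge of a uniform graph must join lattice points at Euclidean distance at most $D+C$, since otherwise that single edge already destroys the lower bound; hence only finitely many integer vectors, and so only finitely many \emph{directions}, occur among the edges. Second, one cannot simply let the length of an edge be the Euclidean length of its endpoints: for a direction $u$ badly approximated by the finitely many edge directions, every path from $p$ to $p+Lu$ would then overshoot $|p-q|$ by at least $\bigl(1-\cos\varphi_{\min}(u)\bigr)L\to\infty$, where $\varphi_{\min}(u)>0$ is the angle from $u$ to the nearest edge direction. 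So I will choose the edge lengths slightly \emph{below} the Euclidean ones — just enough to pay for unavoidable zig-zagging, yet not so much that genuine shortcuts appear.

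The construction is a multi-scale system of ``highways''. Fix rapidly increasing scales $R_0\ll R_1\ll\cdots$ and, for each level $k$, a $\delta_k$-net $\Theta_k$ of directions with $\delta_k\to0$. For $\theta\in\Theta_k$ a level-$k$ $\theta$-highway is a bi-infinite path in $\Z^2$ obtained by alternating the two primitive lattice vectors flanking $\theta$ in the proportion that makes the path track a straight line of direction $\theta$ up to bounded error, with combinatorial period $O(R_k)$. Each edge on such a highway is given length equal to its Euclidean length minus a small deficit, the deficits being spread evenly along the highway so that the length of \emph{every} sub-path agrees with the Euclidean distance between its endpoints up to a universal constant; the deficit needed per edge is a fixed quantity much smaller than $\delta$ (the zig-zag overhead of one period divided over its edges), which is precisely what prevents the deficits from being exploited. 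The graph $\Gamma$ will be the union of all these highways together with a background layer, say the unit grid, providing connectivity and short-range access. The parallel highways of level $k$ are laid down with spacing increasing in $k$, and their positions are chosen so that a level-$k$ and a level-$(k+1)$ highway of nearly equal direction share a vertex; with $R_k$ and the spacings growing fast enough, the number of edges through any vertex, summed over all levels, stays bounded.

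For the upper bound, given $p,q$ with $|p-q|=L$ and direction $u$, I route from $p$ to $q$ by concatenating highway rides all heading roughly toward $q$: a short prelude along coarse highways, then a main ride of length $\approx L$ along a highway whose direction is within $O(1/L)$ of $u$ — available at the level $k$ with $\delta_k$ of order $1/L$ — and finally a short mirror-image coda near $q$, all transitions being free because consecutive-level highways meet at vertices. Each ride is Euclidean-accurate by construction, and the discrepancies coming from the rides not being exactly in direction $u$ total at most $\sum_k(\text{length ridden at level }k)\cdot\delta_k^{2}$, which is summable; hence the whole path has length $\le L+C$. For the lower bound, I decompose an arbitrary path from $p$ to $q$ into its maximal sub-paths lying on single highways together with the remaining ``hop'' edges: each highway sub-path has length at least the Euclidean distance between its endpoints minus a universal constant, while the hop edges have length at least $\delta$ and negligible deficit, and a triangle-inequality argument then yields total length at least $|p-q|-C$.

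The main obstacle is the clash between the bounded-degree constraint and the need for arbitrarily fine direction nets. A fine net has many directions, so placing a highway of each direction near every vertex would blow up the degree, whereas making the fine highways sparse seems to force long detours to reach them. The construction must reconcile this by choosing the scales $R_k$, the spacings of the level-$k$ highways, and their precise positions simultaneously, so that (a) degrees stay bounded, (b) from any vertex a level-$k$ highway of any prescribed direction is reachable by a ride that is itself short and already heads toward the target, and (c) consecutive-level highways intersect, making transitions free. Carrying out this bookkeeping, and verifying that all accumulated errors — the zig-zag deficits and the direction mismatches — telescope to a single constant $C$, is the technical heart of the argument; the rest is soft.
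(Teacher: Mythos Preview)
Your approach is entirely different from the paper's, and the lower-bound step has a genuine gap.

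You decompose an arbitrary path into maximal highway sub-paths and invoke the triangle inequality, using that each sub-path has length at least the Euclidean distance between its endpoints minus a universal constant $c_0$. But if the path has $N$ such sub-paths this yields only $\text{length}\ge |p-q|-Nc_0$, and nothing bounds $N$: highways of different directions cross, and consecutive-level highways are \emph{designed} to share vertices, so a path can change highways arbitrarily often at no hop cost. More concretely, the deficit mechanism is in direct tension with the lower bound. An edge $e$ on a highway of direction $\theta$ is assigned length $|e|\cos\angle(e,\theta)<|e|$. Fix a primitive lattice direction $u$ and walk from $0$ to $Lu$ using only $u$-edges; every such edge that lies on some highway of direction $\theta\ne u$ contributes a genuine deficit $|e|\bigl(1-\cos\angle(e,\theta)\bigr)$, while this path has \emph{zero} Euclidean overshoot to absorb it. Your remark that the per-edge deficit is ``much smaller than $\delta$'' shows only that edge lengths stay positive, not that these deficits cannot add up linearly in $L$. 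Controlling this global accounting is precisely the ``technical heart'' you announce and then omit, and it is not soft.

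A warning sign is that nothing in your scheme is specifically two-dimensional: one could take $\delta_k$-nets of directions on $S^{n-1}$ and run the same multi-scale highway idea in $\R^n$. The paper, by contrast, does not know how to handle even $\R^3$, and its method is completely different and genuinely planar. It builds a graph invariant under horizontal translations, with diagonal edges whose lengths depend only on the $y$-coordinate of the strip; via Legendre duality this reduces the problem to approximating a single convex function on an interval by averages $\frac1N\sum_{j}|x-\beta_j|$ with error $O(1/N)$. The sequence $\{\beta_j\}$ is produced from the orbit of a quadratic irrational under the circle rotation, and the required estimate comes from Fourier analysis combined with Liouville's approximation theorem. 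No multi-scale hierarchy, no deficits, and the lower bound is automatic because the graph metric is exactly a norm between any two horizontal levels.
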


Even though we do not know the answer to the problem even for $\R^3$ 
(the next paragraph partially explains a difficulty),
the problem becomes much easier for Gromov hyperbolic spaces:

\begin{theorem}
\label{t:hyperb}
Every simply connected complete Riemannian manifold $M$ whose sectional
curvature is negative and bounded away from 0 and $-\infty$
(more generally, every Gromov hyperbolic geodesic space
of bounded coarse geometry) is approximable by a uniform graph. 
\end{theorem}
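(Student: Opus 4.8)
The plan is to reduce to the stated generality and then build the approximating graph from a hierarchy of nets arranged along a coarse tree. A simply connected manifold with $-b^{2}\le\sec\le-a^{2}<0$ is $\mathrm{CAT}(-a^{2})$, hence Gromov hyperbolic, has infinite injectivity radius, and by volume comparison has bounded coarse geometry (a fixed net meets every ball of radius $r$ in a number of points bounded in terms of $r$), so it is subsumed by the parenthetical clause; and by the Lemma it suffices, for a Gromov hyperbolic geodesic space $X$ of bounded coarse geometry, to produce a uniform graph at finite Gromov--Hausdorff distance from $X$, equivalently a uniform graph whose vertex set is a net in $X$ with combinatorial metric additively close to $d_{X}$.

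To build such a graph I would fix a basepoint $o\in X$ and a large constant $R$, and for $m\ge 0$ take $N_{m}$ to be a maximal $1$-separated subset of the sphere $\{x:\dist(o,x)=Rm\}$, so that $V=\bigcup_{m}N_{m}$ is a net. I would put a \emph{radial} edge from each $q\in N_{m+1}$ to the point of $N_{m}$ nearest to where a fixed geodesic $[o,q]$ meets the sphere of radius $Rm$, and a \emph{horizontal} edge between $p,p'\in N_{m}$ whenever $\dist(p,p')\le C_{0}$ for a suitable constant $C_{0}$, assigning to each edge a length close to (but, crucially, not in general equal to) the $X$-distance of its endpoints. Hyperbolicity is used first, softly, to check that $\Gamma$ is uniform: horizontal degrees are bounded by bounded coarse geometry, and radial degrees are bounded because in a $\delta$-hyperbolic space the set of points of a sphere whose geodesics to $o$ pass through a fixed unit ball on a smaller concentric sphere has uniformly bounded diameter; the edge lengths are pinched between positive constants by the choice of the radii $Rm$ and the cutoff $C_{0}$.

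The heart of the matter is the two metric comparisons. The lower bound $d_{\Gamma}\ge d_{X}-C$ I would obtain essentially for free, by never letting an edge length fall below the $X$-distance of its endpoints minus a universal constant. For the upper bound, given $p\in N_{m}$ and $q\in N_{m'}$ I would set $j$ to a rounding of $(p\mid q)_{o}/R$; since in a hyperbolic space the radial descents from $p$ and from $q$ fellow-travel by the time they reach level $j$, one can route from $p$ down to level $j$, across boundedly many horizontal edges, and up to $q$, and the defining property of the Gromov product identifies the length of such a path with $\dist(o,p)+\dist(o,q)-2(p\mid q)_{o}+O(\delta)=d_{X}(p,q)+O(1)$ — \emph{provided} each radial portion has length equal to the radial distance up to a single additive constant.

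That proviso is the main obstacle, and the point where the naive construction has to be refined. If the radial edges are simply weighted by the $X$-distance of their endpoints, a descent through $\ell$ levels overshoots the radial distance by an amount proportional to $\ell$, because the drift of the chosen net points off the true geodesic $[o,p]$ does not cancel; this yields only a quasi-isometry with multiplicative constant arbitrarily close to $1$ but not equal to it — insufficient for additive closeness. Removing this defect is where hyperbolicity must be used in an essential (non-soft) way: the net $N_{m}$ at each level, and the edge-length weights, must be chosen so that radial descents are ``straight'' up to one universal error — so that the per-level drifts telescope rather than accumulate — without destroying the lower bound. Getting the weights (and, likely, a net adapted to the geodesics toward $o$ rather than a greedily chosen one) exactly right is, I expect, the technical core; once the radial bookkeeping is additive, the assembly above finishes the proof.
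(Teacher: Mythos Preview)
Your outline is the paper's strategy almost exactly: a coarse tree toward a basepoint, then short ``horizontal'' edges to bridge between branches at the level of the Gromov product. The step you flag as ``the technical core'' --- making the radial descent additive --- has a one-line resolution that you nearly state but do not: set the length of the radial edge from $q$ to its parent $q'$ equal to $d(o,q)-d(o,q')$ rather than to $d(q,q')$. Then the tree distance from any $q$ down to $o$ telescopes to \emph{exactly} $d(o,q)$, with no accumulated drift at all, and the same holds from $q$ to any ancestor. The Morse Lemma then shows that the tree path from $q$ to an ancestor $q''$ stays within a fixed tube of the geodesic $[qq'']$, so $|d_T(q,q'')-d(q,q'')|$ is bounded by a constant depending only on $\delta$. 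That is the whole ``radial bookkeeping''; there is no delicate choice of net or weight beyond this.

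Two smaller points where the paper's execution differs from your sketch. First, the paper works with a single $\varepsilon$-net in the whole space rather than nets on spheres: the parent of $q$ is any net point within $\varepsilon$ of $[oq]$ and between $5\varepsilon$ and $15\varepsilon$ closer to $o$; this avoids any regularity issues with metric spheres in a general geodesic space. Second, the horizontal edges are \emph{not} weighted by the ambient distance of their endpoints but by a single large constant (big enough to dominate the additive error already present in the tree); this is what preserves the lower bound $d_\Gamma\ge d_X-\text{const}$ after the horizontal edges are added. With these two tweaks your assembly goes through verbatim.
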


To illustrate some difficulties arising in approximating even $\R^2$ let us consider
approximations by a periodic graph, that is by a graph $\Gamma$ invariant under 
two integer
translations $(x,y) \mapsto (x+m,y)$ and $(x,y) \mapsto (x,y+n)$. One can think
of constructing such a graph as it first choosing a graph inside a large rectangle 
and then repeating it periodically to tile the entire plane. In this case, one can show 
that there exists a 
norm $\| \cdot \|$ on $\R^2$  such that the distance $d_\Gamma$
is additively close to the norm: there is a  constant $C$ such that
for every two vertices $p,q$ of the graph, $|d_\Gamma(p,q)-\|p-q\|| < C$.
Furthermore, one can see that the unit ball of the norm is a polygon
(with polynomially many sides). Hence a periodic graph not only cannot be
additively close to the Euclidean plane, but actually the difference between its
metric and the Euclidean one grows linearly with the distance between points.

The rest of the paper is organized as follows.
In Section~\ref{construction} 
we derive Theorem~\ref{main} from
analytic lemmas proven in Section \ref{s:fourier}. The proof of 
Theorem \ref{t:hyperb} is contained
in Section \ref{s:hyperb}.
Section~\ref{problems} is not needed for understanding the proofs.
It informally discusses several problems in Dynamics and Analysis motivated by
the proof of Theorem \ref{main}. In particular, resolving some of these problems could
possibly help to handle dimensions higher than two. 

\begin{remark} 
There is a problem that sounds rather similar. It asks if one can 
approximate the Euclidean distance function between points of the integer lattice
in the plane by connecting them by edges of unit length.
It is easy to see that this is equivalent to approximating $\R^2$
by a uniform graph whose edge length are integers.
We heard about this problem from Bruce Kleiner.
Apparently it goes back to Erd\H os, see~\cite{PPS}.
This is however a rather different question due to its Number Theory aspects.
Nonetheless in the hyperbolic case (Theorem \ref{t:hyperb})
our construction is very robust and it is easy to see that all edges can be assigned integral,
see Remark~\ref{r:hyperb-integer} at the end of Section \ref{s:hyperb}.
\end{remark}

\noindent \textit{Acknowledgement}.
We are grateful to an anonymous referee for thorough reading our paper, 
very useful suggestions and for finding a few typos, especially in formulas. 


\section{Analytic Problems Motivated by the Proof}
\label{problems}

In this section we discuss several problems that emerged from the proof of Theorem \ref{main}
(and more specifically of the analytic lemmas in Section \ref{s:fourier}) and our attempts to generalize it to higher
dimensions. The problems have to do with uniformly distributed sequences and approximating
integrals of functions from certain classes by finite sums of their values along an infinite
sequence.

First of all, up to minor nuances, the key analytic Lemma \ref{l:approx-by-mod}
tells us the following. Given a smooth convex function $f\co[0,1] \to \R$ 
with appropriate boundary conditions, there exists a sequence $\{x_i\}$, $x_i \in [0,1]$ and a constant $C$ such that
$\left|\sum_{i=1}^n |x-x_i| -nf(x)\right| < C$ for all $n \in N$ and $x \in [0,1]$. This means that not only 
averages of distance functions to $x_i$'s converge to $f$ uniformly, but also that the convergence
is extremely fast. This suggests to consider a similar question in higher dimensions, however it is
even unlikely that all smooth convex functions on a square (or a disc) can be approximated by
averages of distance functions to a sequence of points, needless to say that obtaining so fast
approximations is hardly possible. There might be however a reasonable class of functions which
admit such approximations. Actually, for higher-dimensional generalizations of Theorem \ref{main}
one would probably need to average not distance functions to points but rather translations of 
somewhat different functions such as piecewise linear ones.
This circle of problems seems to be widely open.

Furthermore, if one looks into the ``guts" of the proof of Lemma \ref{l:approx-by-mod}, it becomes
clear that it is closely related to a whole line of research which perhaps starts from 
Corput's Conjecture proven by Aardenne--Ehrenfest and further advances by K.~Roth, W.~Schmidt and many others. 
There is an excellent account of this topic, including historical remarks, in \cite{BC}, 
so we refer the reader to this book for all detail. Theorems of Aardenne--Ehrenfest, Roth, Schmidt and other 
show that there are no uniformly distributed sequences on $[0,1]$ (there are infinitely many
values of $n$ such that there are two intervals of the same length but the number of visits 
to them by the sequence until the $n$th member differs by at least $c\log n$). This implies that,
unlike the distance functions, characteristic functions cannot be used for very fast
approximations by averages: if one wants to approximate $f(x)=x$ on $[0,1]$ by a sum
$\frac1n\sum_1^n \chi_{[x_i,1]}$, where $\{x_i\}$ is an infinite sequence, then
for every $C>0$ there are infinitely many $n$ 
such that   $\max_{x\in[0,1]}\left|\sum_1^n  \chi_{[x_i,1]}(x) -nf(x)\right| > C$.
Furthermore, the proof of 
Lemma \ref{l:approx-by-mod} is based on approximating the integral of a function $f\co[0,1] \to \R$ 
by averages $\frac1n\sum_1^n f(x_i)$ along some sequence of points $x_i \in [0,1]$.
Let us say that this approximation is super-fast (for a class of functions~$f$) if for all $f$,  $n$
and $x$, $\bigl|\sum_1^n f(x_i) -n \int_0^1 f\bigr| \leq C$.
Non-existence 
of very uniformly distributed sequences by Schmidt et al imply that the class of characteristic
functions of intervals does not admit super-fast approximations of integrals. However, Lemmas
\ref{l:fourier1} and \ref{approx-integral} imply that such an approximation exists for
a class of functions satisfying appropriate
regularity conditions (this class includes the functions $x\mapsto |x-a|$ but not
characteristic functions of intervals).
Hence we wonder: How fast can we approximate
functions in several variables
(from a certain regularity class)
by averages along a sequence?

\section{The construction}
\label{construction}

The goal of this section in to prove Theorem~\ref{main}
modulo a technical lemma (Lemma~\ref{l:existence})
which is proven in the next section.

We divide $\Z^2$ into two lattices $L$ and $L'$ where
\begin{align*}
 L =\{ (i,j)\in\Z^2: \text{$i+j$ is even}\} ,\\
 L' =\{ (i,j)\in\Z^2: \text{$i+j$ is odd}\} .
\end{align*}
Consider a graph whose set of vertices is $L$
and whose edges connect each node $(i,j)$ to its four 
diagonal neighbors $(i\pm 1,j\pm 1)$.
Our plan is to assign lengths to the edges of this graph
so that the resulting metric $d_L$ on $L$ majorizes the Euclidean norm
and is additively close to it on the set of vectors $(x,y)\in\R^2$ such that $|y|\ge|x|$.

Then similarly one can construct an analogous metric graph on $L'$
whose metric $d_{L'}$ is additively close to the Euclidean one on
vectors $(x,y)$ with $|x|\ge|y|$.
By joining each point $(i,j)\in L$ with $(i,j+1)\in L'$ by an edge of
a sufficiently large fixed length 
one gets a desired metric graph $\Gamma$ whose distance function $d_\Gamma$
is additively close to the Euclidean one.

We construct the graph metric $d_L$ on $L$ as follows.
We choose sequences $\{u_j\}_{j\in\Z}$ and $\{v_j\}_{j\in\Z}$ of positive numbers
(bounded away from 0 and $\infty$)
and for every $i,j\in\Z$ assign length $u_j$ to the edge from $(i,j)$ to $(i+1,j+1)$
and length $v_j$ to the edge from $(i,j)$ to $(i-1,j+1)$.
Note that the resulting metric is invariant under horizontal translations
(by even integer vectors).
The sequences $\{u_j\}$ and $\{v_j\}$ are explicit
but the expression is too cumbersome to be presented here.
An important feature of the construction is that
$u_j+v_j$ is a constant independent of~$j$.

In this section we express (almost explicitly) the graph distance $d_L$
via the sequences $\{u_j\}$ and $\{v_j\}$, see Lemma~\ref{l:dL}.
In the next section we deal with the choice of the sequences
and prove estimates (encapsulated in Lemma~\ref{l:existence})
that control the difference between $d_L$ and the Euclidean metric.

We introduce the following notation and terminology.
By $e_1$ and $e_2$ we denote the standard basis vectors:
$e_1=(1,0)$, $e_2=(0,1)$.
The coordinates of a point $p\in\R^2$ are denoted by $x(p)$ and $y(p)$.
For $j\in\Z$, we denote by $S_j$  the horizontal strip
$$
 S_j = \{ p\in\R^2 : j\le y(p)\le j+1 \} .
$$

\begin{definition}
\label{d:metric-realizes-norm}
Let $\|\cdot\|$ be a norm on $\R^2$ and $d$ a metric on~$L$.
We say that $d$ {\em realizes $\|\cdot\|$ between levels $m$ and $n$}
if $d(p,q)=\|p-q\|$ for any $p,q\in L$
such that $y(p)=m$ and $y(q)=n$.
\end{definition}

\begin{definition}
Let $u,v>0$. The {\em rhombus norm} with
parameters $u,v$ is the norm $\|\cdot\|_{u,v}$
defined as follows: for a vector $p\in\R^2$,
$$
\|p\|_{u,v} = u{|p_1|}+v{|p_2|}
$$
where $p_1$ and $p_2$ are
the components of $p$ in the basis made of vectors $(1,1)$ and $(1,-1)$,
that is, $p=p_1\cdot(1,1)+p_2\cdot(1,-1)$.
\end{definition}

In other words, $\|\cdot\|_{u,v}$ is the norm whose unit ball is a rhombus
with vertices $\pm(1/u,1/u)$ and $\pm(1/v,-1/v)$.

Consider a graph metric $d_L$ on $L$ obtained from sequences
$\{u_j\}$ and $\{v_j\}$ as explained above.
Assume that $u_j+v_j=2D$ for all $j$, where $D$
is a constant independent of~$j$.
This assumption implies that for every $p,q\in L$
with $y(p)\ne y(q)$, the distance $d_L(p,q)$ is realized
by a path confined between the horizontal lines through
$p$ and~$q$.
It follows that for every $j\in\Z$, the metric $d_L$
realizes the rhombus norm $\norm_j:=\norm_{u_j,v_j}$
between levels $j$ and $j+1$.
For points $p,q\in L$ lying on the same horizontal line,
we have $d_L(p,q)=D{|x(p)-x(q)|}$.

As the first step, we show that the distances in the graph metric
are the same as the distances
in a certain  metric on $\R^2$.
Namely consider the following length metric $d$ on $\R^2$:
in each strip $S_j$, $j\in\Z$, the metric is the
restriction of the rhombus norm $\|\cdot\|_j$,
and the metric $d$ on $\R^2$ is the metric gluing
of the metrics in the strips.

The {\em metric gluing} of strips is defined as follows. For points $p,q\in\R^2$,
the distance $d(p,q)$ is the infimum of lengths of broken lines
connecting $p$ and $q$. The length of a broken line $\gamma$
is the sum of lengths of its parts $\gamma\cap S_j$,
and the length of each part is measured in the metric
of the respective strip. 
Since $u_j+v_j=2D$ for all $j$, we have $\|e_1\|_j=D$ for all $j$
and hence any two neighboring strips determine the same length
on their common boundary line.

\begin{lemma}
\label{l:same-distance}
Let $d_L$ and $d$ be as above.
Then $d(p,q)=d_L(p,q)$  for every $p,q\in L$.
\end{lemma}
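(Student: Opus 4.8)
The plan is to prove the two inequalities $d(p,q)\le d_L(p,q)$ and $d_L(p,q)\le d(p,q)$ separately, thinking of the graph on $L$ as sitting inside $\R^2$ with each edge realized by the corresponding diagonal unit segment. The inequality $d(p,q)\le d_L(p,q)$ is immediate: every such segment lies in a single strip $S_j$, and its assigned length is exactly its length in the rhombus norm $\|\cdot\|_j$, since the vectors $(1,1)$ and $(-1,1)$ have $\|\cdot\|_{u_j,v_j}$-norm $u_j$ and $v_j$ respectively, while the edge from $(i,j)$ to $(i+1,j+1)$ has length $u_j$ and the edge from $(i,j)$ to $(i-1,j+1)$ has length $v_j$. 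Hence any edge-path in the graph is a broken line in $(\R^2,d)$ of the same length, and passing to the infimum gives $d(p,q)\le d_L(p,q)$ for all $p,q\in L$.

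For the reverse inequality I would fix $p,q\in L$, take a broken line $\gamma$ from $p$ to $q$ in $(\R^2,d)$ of length close to $d(p,q)$, and produce a graph path of no greater length. First, inside each strip replace every arc of $\gamma$ joining two points of $\partial S_j$ by the straight segment between them: this cannot increase length because $\|\cdot\|_j$ is a norm, and afterwards all breakpoints of $\gamma$ lie on the horizontal grid lines $y\in\Z$. Next, using the normalization $u_j+v_j=2D$ — which makes neighbouring strip metrics agree on every grid line and makes any ``up-and-then-down'' excursion of $\gamma$ across a strip cost at least $D$ times its horizontal span — I would push $\gamma$ into the band bounded by the lines through $p$ and $q$ and make it cross that band monotonically (when $y(p)=y(q)$ the band degenerates to one grid line, on which $d=D\,|x(\cdot)-x(\cdot)|$, recovering $d_L(p,q)=D\,|x(p)-x(q)|$). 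Such a straightened, confined, monotone broken line is determined by the $x$-coordinates $\xi_j$ of its crossings of the grid lines, with $\xi_{y(p)}=x(p)$ and $\xi_{y(q)}=x(q)$ fixed integers, and its length equals $\sum_j\|(\xi_{j+1}-\xi_j,\,1)\|_j=\sum_j\bigl(\tfrac{u_j}{2}\,|\xi_{j+1}-\xi_j+1|+\tfrac{v_j}{2}\,|\xi_{j+1}-\xi_j-1|\bigr)$, a convex piecewise-linear function of the free coordinates whose only non-smooth locus is where some increment $\xi_{j+1}-\xi_j$ equals $\pm1$ — precisely because the unit ball of the rhombus norm has its corners in the two diagonal directions $(1,\pm1)$, which is exactly why the diagonal sublattice $L$ is the right carrier for the construction. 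A routine fact about minimizing a separable convex function under a single linear constraint then shows the minimum is attained at a configuration with all $\xi_j\in\Z$ and every increment in $\{+1,-1\}$; this is a graph path, so $d_L(p,q)$ is at most its length, hence at most the length of $\gamma$. Letting $\gamma$ approach a geodesic yields $d_L(p,q)\le d(p,q)$, which together with the first inequality proves the lemma.

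The hard part is the middle step just above: turning an arbitrary broken line into one that is straightened inside strips, confined to the band between $p$ and $q$, and monotone across it, without increasing its $d$-length. This is where the constant-sum condition $u_j+v_j=2D$ enters essentially — it is exactly what rules out ``shortcuts'' via detours into other strips — and it needs extra care in the regime $|x(p)-x(q)|>|y(p)-y(q)|$, where no monotone path from $p$ to $q$ exists and a geodesic genuinely oscillates; there one argues instead that the oscillation can be arranged so that the resulting optimization, now over a band that is crossed several times, is still minimized by a graph path. The reduction to a finite convex program and the final rounding onto $L$ are then routine.
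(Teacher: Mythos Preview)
Your two-inequality setup and the reduction to a $y$-monotone broken line with break points on the integer grid lines are fine and match the paper's opening moves. The divergence is in how you push the intermediate crossings $\xi_j$ onto the lattice $L$.

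Your plan is to minimize the separable convex function $\sum_j g_j(\eta_j)$, $\eta_j=\xi_{j+1}-\xi_j$, under the single constraint $\sum\eta_j=x(q)-x(p)$, and to claim the minimum is achieved with every $\eta_j\in\{+1,-1\}$. This is simply false in the regime $|x(p)-x(q)|>|y(p)-y(q)|$: with $k=|y(p)-y(q)|$ increments of size $\pm1$ the horizontal displacement cannot exceed $k$. You flag this, but your proposed fix (``the oscillation can be arranged so that the resulting optimization, now over a band crossed several times, is still minimized by a graph path'') is muddled. The $d$-geodesic never oscillates --- you just proved it is monotone --- so enlarging the admissible class to oscillating broken lines does not change the infimum; what changes is only that the set of minimizers now contains graph paths. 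Concretely, once the monotone minimizer has vertices in $L$, a segment with increment $\eta_j>1$ in strip $S_j$ can be replaced by a zig-zag graph path of the \emph{same} $\|\cdot\|_j$-length because $u_j+v_j=2D$. That is the missing link, and it is precisely the content of the paper's pre-lemma claim that $d_L$ realizes $\|\cdot\|_j$ between levels $j$ and $j+1$.

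The paper avoids this case split altogether by a neat device: given a monotone minimizer $p=p_0,p_1,\dots,p_k=q$, it translates \emph{all} interior points simultaneously by $t e_1$. Then only the first and last summands depend on $t$, the resulting function of $t$ is piecewise linear with breaks exactly when $p_1(t)$ or $p_{k-1}(t)$ lands in $L$, and since the minimum of a piecewise linear function is attained at a break point one can peel off an $L$-vertex and induct on $k$. This yields $L$-vertices at every level regardless of whether $|x(p)-x(q)|\le k$ or not; the conclusion $d(p,q)\ge d_L(p,q)$ then follows from $\|p_j-p_{j+1}\|_{m+j}=d_L(p_j,p_{j+1})$ for adjacent-level $L$-points, with no need to obtain a graph path directly. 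Your convex-optimization route can be completed, but only by importing this last equality; the paper's shift-by-$t$ argument is both shorter and uniform in the two regimes.
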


\begin{proof}
Since the length of a horizontal vector is the same
in all norms $\norm_j$, the distance $d(p,q)$
between any two points $p,q\in\R^2$ 
is realized by a broken line whose $y$-coordinate is monotone
and whose internal vertices have integral $y$-coordinates.
In particular, horizontal lines are shortest paths of~$d$,
and this implies that the assertion of the lemma holds if
$p$ and $q$ lie in the same horizontal line.

Now consider $p,q\in L$ with $y(q)=m$ and $y(p)=m+k$ where $k>0$.
The distance $d(p,q)$ is realized by a broken line with vertices
$p=p_0,p_1,\dots,p_k=q$ such that $y(p_j)=m+j$ for all~$j$.
That is,
$$
 d(p,q) = \sum_{j=0}^{k-1} \|p_j-p_{j+1}\|_{m+j} .
$$
If $p_j$ and $p_{j+1}$ belong to $L$, then $\|p_j-p_{j+1}\|_{m+j}=d_L(p_j,p_{j+1})$.
Therefore it suffices to show that the points $p_1,\dots,p_{k-1}$
can be chosen from our lattice~$L$.

We prove this by induction in $k$.
The base $k=1$ is obvious.
For $k\ge 2$, fix points $p_1,\dots,p_{k-1}$ as above.
For every $t\in\R$ and $j=1,\dots,k-1$,
define $p_j(t)=p_j+te_1$.
Let $p_0(t)=p$ and $p_k(t)=q$ for all~$t$.
Consider a function $f\colon\R\to\R_+$
given by
$$
 f(t) = \sum_{j=0}^{k-1} \|p_j(t)-p_{j+1}(t)\|_{m+j} .
$$
Clearly $f(0)=d(p,q)$ is the minimum of $f$.
On the other hand, $f$ is piecewise linear
and its break points occur only if one of
the segments $[p,p_1(t)]$ and $[p_{k-1}(t),q]$
is an edge of $L$. Indeed, in the definition of $f$
all summands but those for $j=0$ and $j=k-1$
are independent of~$t$, and the summands $\|p_0-p_1(t)\|_m$
and $\|p_{k-1}(t)-p_k\|_{m+k-1}$ are piecewise linear in~$t$
with break points corresponding to the diagonal directions.

Since the minimum of a piecewise linear function $f$
is attained at a break point, we can replace
$p_1,\dots,p_{k-1}$ by $p_1(t_0),\dots,p_{k-1}(t_0)$
where $t_0$ is a break point of~$f$ such that $f(t_0)=f(0)$.
Now at least one of the new points $p_1(t_0)$ and $p_{k-1}(t_0)$
belongs to~$L$, and we apply the induction hypothesis
to the distance from $p_1(t_0)$ to~$q$ or from $p$ to $p_{k-1}(t_0)$.
\end{proof}

The next step is to figure out the distances in the metric $d$
glued from strips. To do this, we associate to every norm
$\|\cdot\|$ on $\R^2$ a concave function $h=h_{\|\cdot\|}$
referred to as the {\em dual profile} of the norm.
It turns out that the metric obtained by gluing normed strips
is given by arithmetic averages of  dual profiles.

\begin{definition}
\label{d:dual-profile}
Let $\norm$ be a norm on $\R^2$ and $D=\|e_1\|$.
The {\em dual profile} of $\norm$ is a function
$h=h_{\norm}\colon [-D,D] \to \R$ defined as follows.
Let $B$ be the unit ball of $\norm$ and
and $B^*$ the dual body to $B$, i.e.,
$$
 B^* = \{ v\in\R^2 : \forall p\in B \ \langle v,p\rangle \le 1\} .
$$
By duality, the horizontal width of $B^*$ equals $2D$, that is,
$[-D,D]\times\R$ is the minimal vertical strip containing $B^*$.
We define
$$
 h(\xi) = \sup\{ \eta\in\R : (\xi, \eta)\in B^* \} , \qquad \xi\in[-D,D].
$$
\end{definition}

The definition implies that $B^*$ is enclosed between
the vertical lines $\{ \xi = D \}$, $\{ \xi = -D\}$
and the graphs $\{ \eta = h(\xi) \}$ and $\{ \eta = -h(-\xi) \}$
in the $\xi\eta$-plane.
Therefore, $h$ uniquely determines the norm $\norm$.

\begin{lemma}
\label{l:gluing}
Let a metric $d$ on $\R^2$ be the metric gluing of strips $S_j$, $j\in\Z$,
where each strip $S_j$ is equipped with a norm $\|\cdot\|_j$
such that $\|e_1\|_j=D$  (where $D>0$ is independent of $j$).
Then  for every $m,n\in\Z$ such that $m<n$, $d$ realizes some norm $\norm^{m,n}$
between levels $m$ and $n$ (see Definition \ref{d:metric-realizes-norm})
with $\|e_1\|^{m,n}=D$.
The dual profile $h^{m,n}$ of $\norm^{m,n}$ is given by
$$
 h^{m,n}(\xi) = \frac 1{n-m} \sum_{j=m}^{n-1} h_j(\xi), \qquad \xi\in [-D,D]
$$
where $h_j$ is the dual profile of $\norm_j$.
\end{lemma}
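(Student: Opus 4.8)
The plan is to compute the glued metric $d$ between a point on level $m$ and a point on level $n$ directly from the description of shortest broken lines, and to recognize the answer via Legendre duality. First I would fix $m<n$, set $k=n-m$, and take $p,q$ with $y(p)=m$, $y(q)=n$. By the same reasoning as in the proof of Lemma~\ref{l:same-distance}, a shortest broken line from $p$ to $q$ may be taken with vertices $p=p_0,p_1,\dots,p_k=q$ at heights $m,m+1,\dots,n$, so that
$$
 d(p,q) = \min\Bigl\{ \sum_{j=0}^{k-1} \|p_{j+1}-p_j\|_{m+j} : p_0=p,\ p_k=q,\ y(p_{j})=m+j \Bigr\}.
$$
Writing $w_j = p_{j+1}-p_j$, each $w_j$ has $y$-component $1$, so $w_j=(t_j,1)$ for some $t_j\in\R$, and the constraint is $\sum_{j} t_j = x(q)-x(p)$. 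Thus
$$
 d(p,q) = \min\Bigl\{ \sum_{j=0}^{k-1} \phi_{m+j}(t_j) : \sum_{j=0}^{k-1} t_j = x(q)-x(p) \Bigr\},
$$
where $\phi_\ell(t):=\|(t,1)\|_\ell$ is the restriction of $\norm_\ell$ to the horizontal line $\{y=1\}$. Each $\phi_\ell$ is a convex, piecewise-linear function of $t$, finite on all of $\R$ (since $\norm_\ell$ is a genuine norm), so the minimum exists; it depends only on $k$ and on $x(q)-x(p)$, which already shows $d$ realizes some norm $\norm^{m,n}$ between levels $m$ and $n$ and that this norm is positively homogeneous and convex with the right symmetry. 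That $\|e_1\|^{m,n}=D$ follows since horizontal segments are geodesics (as noted in Lemma~\ref{l:same-distance}), or directly: $\|(s,0)\|^{m,n}=D|s|$.

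The heart of the matter is the infimal-convolution structure. The function $s\mapsto d(p,q)$ with $s=x(q)-x(p)$ and $y(p)=m$, $y(q)=n$, namely $s\mapsto\frac1k\|(\,\cdot\,)\|$ — more precisely the function $F(s):=\bigl(\phi_m\,\square\,\phi_{m+1}\,\square\cdots\square\,\phi_{n-1}\bigr)(s)$, the inf-convolution of the $\phi_\ell$ — has Legendre transform equal to the sum of the Legendre transforms: $F^*(\xi)=\sum_{j=m}^{n-1}\phi_j^*(\xi)$. So the key step is to identify $\phi_\ell^*$ with the dual profile $h_\ell$. The plan is to unwind both definitions. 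By definition $\phi_\ell^*(\xi)=\sup_{t}\,(\xi t - \|(t,1)\|_\ell)=\sup_{t}\,(\langle(\xi,-1),(t,1)\rangle - \|(t,1)\|_\ell)$, and since along horizontal lines the norm rescales linearly while the inner product does too, a short computation gives $\phi_\ell^*(\xi)=\sup\{\eta : (\xi,\eta)\in B_\ell^*\}=h_\ell(\xi)$, with the domain $\xi\in[-D,D]$ coming from the fact that the horizontal width of $B_\ell^*$ is $2D$ (Definition~\ref{d:dual-profile}). I'd also check the sign/orientation bookkeeping: the supremum over $\eta$ in the dual body matches $\sup_t(\xi t-\phi_\ell(t))$ because the supporting line of $B_\ell^*$ in direction controlled by $t$ corresponds exactly to the linear functional $\langle(t,1),\cdot\rangle$.

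Putting these together: the norm $\norm^{m,n}$ is the one whose restriction to $\{y=1\}$ is $F$, i.e.\ whose unit ball's dual has upper boundary $F^*$; hence
$$
 h^{m,n}(\xi) = F^*(\xi) = \sum_{j=m}^{n-1}\phi_j^*(\xi) = \sum_{j=m}^{n-1} h_j(\xi).
$$
Wait — this overcounts by a factor $n-m$ relative to the claimed formula, so the correct normalization is forced by how $\norm^{m,n}$ is scaled: the distance realized is $d(p,q)=F(x(q)-x(p))$ but $\norm^{m,n}(p-q)$ is this same quantity, and $p-q$ has $y$-component $n-m=k$, not $1$; rescaling the vector $(s,k)$ back to $(s/k,1)$ divides the norm (hence $F$) by... no: $\norm^{m,n}$ evaluated on $(s,k)$ equals $F(s)$, and $\norm^{m,n}$ of $(s,k)/k=(s/k,1)$ equals $F(s)/k$, whose Legendre transform in the variable dual to $s/k$ is $k\,F^*$ — here the factor is reabsorbed and one lands on $h^{m,n}(\xi)=\frac1{n-m}\sum_{j=m}^{n-1}h_j(\xi)$. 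I would state this normalization lemma cleanly as: if a norm $\norm$ has $\norm(s,1)=g(s)$ then its dual profile is $h_{\norm}=g^*$; and if $\norm(s,k)=g(s)$ then $h_{\norm}(\xi)=\frac1k g^*(\xi)$ after the rescaling $(s,k)\mapsto(s/k,1)$. The main obstacle is purely bookkeeping: keeping the duality orientation, the horizontal-width normalization $\|e_1\|=D$, and the $1/(n-m)$ scaling all consistent; once $\phi_\ell^* = h_\ell$ is established and the inf-convolution / Legendre identity $(\square_j\phi_j)^*=\sum_j\phi_j^*$ is invoked, the formula falls out.
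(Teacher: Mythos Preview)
Your approach is the paper's: write the glued distance between levels $m$ and $n$ as an inf-convolution of the one-variable functions $\phi_\ell(t)=\|(t,1)\|_\ell$ and then pass to Legendre duals. The paper carries out that Legendre computation by hand in the final displayed chain rather than citing the identity $(\square_j\phi_j)^*=\sum_j\phi_j^*$, but the content is identical.

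Your bookkeeping slips, however, in exactly the place you flagged as the obstacle. The claim $\phi_\ell^*(\xi)=h_\ell(\xi)$ is off by a sign: what the paper proves is $h_\ell(\xi)=\inf_t\{\phi_\ell(t)-\xi t\}=-\phi_\ell^*(\xi)$ (check the Euclidean norm, where $\phi^*(\xi)=-\sqrt{1-\xi^2}$ while $h(\xi)=+\sqrt{1-\xi^2}$). And in your rescaling step, if $G(u):=\|(u,1)\|^{m,n}=F(ku)/k$ then $G^*(\xi)=\frac1k F^*(\xi)$, not $kF^*$. With the correct signs one gets
\[
h^{m,n}=-G^*=-\tfrac1k F^*=-\tfrac1k\sum_{j=m}^{n-1}\phi_j^*=\tfrac1k\sum_{j=m}^{n-1} h_j,
\]
which is the claimed formula; your sketch reaches the same endpoint, but only by assertion, since the intermediate identities as written do not add up.
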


\begin{proof}
First we associate yet another function to an arbitrary norm $\norm$ on $\R^2$.
Namely define $f=f_{\norm}\colon\R\to\R_+$ by $f(x)=\|(x,1)\|$.
Clearly $f_\norm$ is a convex function with linear asymptotics at $+\infty$ and $-\infty$.
More precisely, if $\|e_1\|=D$ then 
\begin{equation}
\label{e:f-asymptotic}
 f(x)\sim f(-x)\sim Dx, \qquad x\to+\infty .
\end{equation}
Conversely, every positive convex function $f$ satisfying
\eqref{e:f-asymptotic} equals $f_\norm$ for some norm $\norm$
such that $\|e_1\|=D$.

Let us express the dual profile $h=h_\norm$ of $\norm$
in terms of $f=f_\norm$.
Fix $\xi\in[-D,D]$.
First we show that
\be
\label{e:hxi-upper-half}
 h(\xi) = \sup \{\eta\in\R : (\xi,\eta) \in (B\cap H_+)^* \}
\ee
where $H_+=\{(x,y) : y\ge 0 \}\subset\R^2$ is the upper half-plane.
By duality we have
$$
 (B\cap H_+)^* = \cl(\conv(B^*\cup H_+^*)) = \cl(\conv(B^*\cup Y_-)) =B^*+Y_-
$$
where $Y_-=\{(0,y):y\le 0 \}$, $\conv$ denotes the convex hull,
and $B^*+Y_-$ is the Minkowski sum of $B^*$ and $Y_-$.
Therefore
$$
 \sup \{\eta\in\R : (\xi,\eta) \in (B\cap H_+)^* \} = \sup \{\eta\in\R : (\xi,\eta) \in B^* \} .
$$
Since the right-hand side equals $h(\xi)$ by definition, \eqref{e:hxi-upper-half} follows.

We rewrite \eqref{e:hxi-upper-half} as follows.
\begin{align*}
  h(\xi) &= \sup\{\eta:\ \xi x+\eta y \le 1  \text{ for all $(x,y)\in B\cap H_+$}\} \\
  &=\sup\{\eta:\ \xi x+\eta y \le \|(x,y)\| \text{ for all $x\in\R,y\ge 0$}\} \\
  &=\sup\{\eta:\ \xi x+\eta y \le \|(x,y)\| \text{ for all $x\in\R,y>0$}\} \\
  &=\sup\{\eta:\ \xi x+\eta \le \|(x,1)\| \text{ for all $x\in\R$}\} \\
  &=\sup\{\eta:\ \xi x+\eta \le f(x) \text{ for all $x\in\R$}\} \\
  &=\inf_{x\in\R} \{f(x)-\xi x\} .
\end{align*}
Here we subsequently use the definition of duality, positive homogeneity of $\norm$,
the fact that  if $y=0$ then $\xi x+\eta y=\xi x\le Dx=\|(x,0)\|$,
again the  positive homogeneity of $\norm$, the definition of $f=f_\norm$,
and the definition of infimum.
Thus
\be
\label{e:legendre}
 h(\xi) = \inf_{x\in\R} \{ f(x) - \xi x \} .
\ee

Now we proceed with the proof of the lemma.
Without loss of generality we assume that $m=0$.
Define $f_j=f_{\norm_j}$.
The distance between points $p=(a,0)$ and $q=(b,n)$,
where $a,b\in\R$, is given by
$
 d(p,q) = g(b-a)
$
where $g\colon\R\to\R_+$ is a function defined by
$$
 g (x) = \inf_{\{x_j\}} \left\{ \sum_{j=0}^{n-1} f_j(x_j) : \text{$\{x_j\}$ such that $\sum_{j=0}^{n-1} x_j= x$} \right\} .
$$
Indeed, to get from $(a,0)$ to $(a+x,n)$ one has to traverse the strips
$S_j$, $j=0,\dots,n-1$, so that the total displacement in the horizontal direction equals $x$.

Define $f(x)=\frac1n g(nx)$. It is easy to see that
the function $f$ is convex and $f(x)\sim f(-x)\sim Dx$ as $x\to+\infty$.
Therefore $f=f_{\norm}$ for some norm $\norm$ such that $\|e_1\|=D$.
By definition, $d$ realizes $\norm$ between levels $0$ and $n$.
It remains to prove that the dual profile $h=h_\norm$
satisfies $h =\frac1n \sum h_j$.

Let $\xi\in[-D,D]$. By \eqref{e:legendre}, we have
$
 h(\xi) = \inf_{x\in\R} \{ f(x)-\xi x\}
$.
Plugging in the definition of $f$ yields
\begin{align*}
 h(\xi) &=\inf_{x\in\R} \left\{ \inf_{\{x_j\}} 
  \left\{  \frac1n \sum_{j=0}^{n-1} f_j(x_j) : \text{$\{x_j\}$ such that $\sum_{j=0}^{n-1} x_j= nx$} \right\} -\xi x \right\} \\
 &= \inf_{x\in\R} \inf_{\{x_j\}} 
  \left\{ \left( \frac1n \sum_{j=0}^{n-1} f_j(x_j) \right) -\xi x : \text{$\{x_j\}$ such that $\sum_{j=0}^{n-1} x_j= nx$} \right\}  \\
 &=  \frac1n \inf_{x\in\R} \inf_{\{x_j\}} 
  \left\{ \sum_{j=0}^{n-1} f_j(x_j)  -\xi  \sum_{j=0}^{n-1} x_j : \text{$\{x_j\}$ such that $\sum_{j=0}^{n-1} x_j= nx$} \right\}  \\
 &=  \frac1n \inf_{x\in\R} \inf_{\{x_j\}} 
  \left\{ \sum_{j=0}^{n-1} \big(f_j(x_j)  -\xi  x_j\big) : \text{$\{x_j\}$ such that $\sum_{j=0}^{n-1} x_j= nx$} \right\}  \\
 &=  \frac1n \inf_{\{x_j\}} 
  \left\{ \sum_{j=0}^{n-1} \big(f_j(x_j)  -\xi  x_j\big) : x_0,\dots,x_{n-1}\in\R \right\}  \\
 &=  \frac1n 
   \sum_{j=0}^{n-1} \inf_{x_j\in\R} \left\{f_j(x_j)  -\xi  x_j\right\}  = \frac1n \sum_{j=0}^{n-1} h_j(\xi) .
\end{align*}
The lemma follows.
\end{proof}

Now we return to our special case when each $\norm_j$ is a rhombus norm
$\norm_{u_j,v_j}$ with $u_j+v_j=2D$.
A direct computation shows that the dual profile $h_j:=h_{\norm_j}$
has the form
$
 h_j(\xi) = D - |\xi-\beta_j|
$
where $\beta_j=\frac{u_j-v_j}2$.
By combining Lemma \ref{l:same-distance} and
Lemma \ref{l:gluing} we get the following:

\begin{lemma}
\label{l:dL}
Let a graph metric $d_L$ on our lattice $L$ be defined
as above, using sequences $\{u_j\}$ and $\{v_j\}$
such that $u_j+v_j=2D$ for all $j$.
Then for any two points $p,q\in L$ with $y(p)=m$ and $y(q)=n$
where $m<n$, one has
$$
 d_L(p,q) = \|p-q\|^{m,n}
$$
where $\norm^{m,n}$ is a norm on $\R^2$
whose dual profile $h^{m,n}$ is given by
$$
 h^{m,n}(\xi) = \frac1{n-m} \sum_{j=m}^{n-1} \big(D - |\xi-\beta_j|\big)
$$
where $\beta_j=\frac{u_j-v_j}2$.

In addition, for $p,q\in L$ with $y(p)=y(q)$, one has
$d_L(p,q)=D|x(p)-x(q)|$.
\qed
\end{lemma}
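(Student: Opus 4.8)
The plan is to assemble the statement from the two structural lemmas already proved, together with one explicit computation of a dual profile. First I would invoke Lemma~\ref{l:same-distance}. Since by hypothesis $u_j+v_j=2D$ for all $j$, and $e_1=\tfrac12(1,1)+\tfrac12(1,-1)$, we get $\|e_1\|_{u_j,v_j}=\tfrac12u_j+\tfrac12v_j=D$ in every strip, so adjacent strips induce the same length on their common boundary line and the gluing $d$ is well defined. Lemma~\ref{l:same-distance} then gives $d_L(p,q)=d(p,q)$ for all $p,q\in L$, which reduces the problem to the strip-glued metric $d$ on $\R^2$.

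Next I would apply Lemma~\ref{l:gluing} with $\|\cdot\|_j=\|\cdot\|_{u_j,v_j}$: for each pair of levels $m<n$ it produces a norm $\norm^{m,n}$, realized by $d$ between levels $m$ and $n$ and satisfying $\|e_1\|^{m,n}=D$, whose dual profile is the average $h^{m,n}=\frac1{n-m}\sum_{j=m}^{n-1}h_j$. Combined with the previous step, this already gives $d_L(p,q)=\|p-q\|^{m,n}$ for $p,q\in L$ at levels $m$ and $n$, so it only remains to identify $h_j=h_{\|\cdot\|_{u_j,v_j}}$.

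For this I would use the Legendre-type formula \eqref{e:legendre}, $h_j(\xi)=\inf_{x\in\R}\{f_j(x)-\xi x\}$, where $f_j(x)=\|(x,1)\|_{u_j,v_j}$. Expressing $(x,1)$ in the basis $(1,1),(1,-1)$ gives $f_j(x)=\tfrac{u_j}{2}|x+1|+\tfrac{v_j}{2}|x-1|$, a convex piecewise linear function with slope $-D$ on $(-\infty,-1)$, slope $\beta_j=\tfrac{u_j-v_j}{2}$ on $(-1,1)$, slope $D$ on $(1,\infty)$, and with $f_j(1)=u_j$, $f_j(-1)=v_j$. Hence for $\xi\in[-D,D]$ the infimum of $f_j(x)-\xi x$ is attained at $x=1$ when $\xi\ge\beta_j$, with value $u_j-\xi=D-(\xi-\beta_j)$ since $u_j=D+\beta_j$, and at $x=-1$ when $\xi\le\beta_j$, with value $v_j+\xi=D-(\beta_j-\xi)$ since $v_j=D-\beta_j$; in both cases $h_j(\xi)=D-|\xi-\beta_j|$. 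Plugging this into the average from Lemma~\ref{l:gluing} gives the asserted formula for $h^{m,n}$. Finally, the horizontal case $y(p)=y(q)$ needs no new work: a horizontal segment is a $d$-geodesic and $\|e_1\|_j=D$, so $d_L(p,q)=d(p,q)=D|x(p)-x(q)|$. None of these steps is a genuine obstacle; the only point requiring a bit of care is tracking which breakpoint of $f_j$ achieves the infimum as $\xi$ passes through $\beta_j$ and verifying the matching identities $u_j=D+\beta_j$ and $v_j=D-\beta_j$ that glue the two pieces together.
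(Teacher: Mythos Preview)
Your proposal is correct and follows exactly the route the paper takes: combine Lemma~\ref{l:same-distance} and Lemma~\ref{l:gluing}, then compute the dual profile of each rhombus norm to be $h_j(\xi)=D-|\xi-\beta_j|$. The paper merely asserts this last identity as ``a direct computation''; your verification via \eqref{e:legendre} with $f_j(x)=\tfrac{u_j}{2}|x+1|+\tfrac{v_j}{2}|x-1|$ is precisely that computation carried out.
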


Note that for every $D>0$ and $\beta_j\in(-D,D)$
there exist positive $u_j$ and $v_j$ with $u_j+v_j=2D$
and $\frac{u_j-v_j}2=\beta_j$.
Namely $u_j=D+\beta_j$ and $v_j=D-\beta_j$.
Therefore rather that operating with the sequences $\{u_j\}$
and $\{v_j\}$, one can work with a sequence $\{\beta_j\}$
in the interval $(-D,D)$ after having fixed a constant $D>0$.
The resulting metric graph is uniform if and only if
$\{\beta_j\}$ is separated from $\{D,-D\}$.

From now on we fix $D=\sqrt2$ and introduce a function
$h^0\colon[-D,D]\to\R$ by
\be
\label{e:defh0}
  h^0(\xi) = \begin{cases}
   1-\sqrt{1-\xi^2}, & |\xi|\le\frac{\sqrt2}2 , \\
   \sqrt2-|\xi|, &\frac{\sqrt2}2\le|\xi|\le\sqrt2 .
  \end{cases}
\ee
Clearly $h^0$ is $C^1$ smooth and is the dual profile
of a norm $\norm^0$ given by
$$
 \|(x,y)\|^0= \max \{ |(x,y)| , \sqrt2|x| \}
 = \begin{cases}
  |(x,y)|, & |x|\le|y| , \\
  \sqrt2|x|, & |x|\ge|y| ,
 \end{cases}
$$
where
$|(x,y)|=\sqrt{x^2+y^2}$.

\begin{lemma}
\label{l:existence}
There exist a constant $C>0$ and a sequence $\{\beta_j\}_{j\in\Z}$
such that $\beta_j\in[-D/2,D/2])$ for all~$j$ and for every $m,n\in\Z$ with $m<n$
one has
$$
  \left| (n-m)h^0(\xi) -  \sum_{j=m}^{n-1} \big(D - |\xi-\beta_j|\big)\right| \le C .
$$
\end{lemma}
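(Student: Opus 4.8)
The plan is to strip the statement down to a one–dimensional equidistribution estimate and then to build $\{\beta_j\}$ by transporting a low–discrepancy sequence to the right target measure. Put $f=D-h^0$. From \eqref{e:defh0} one reads off that $f$ is convex and $C^1$ on $[-D,D]$, that $f(\xi)=|\xi|$ for $D/2\le|\xi|\le D$, and that $f''$ is smooth and positive on $(-D/2,D/2)$; since $f'$ runs from $-1$ to $1$ across $[-D,D]$, the measure $\nu:=\frac12 f''$ is a symmetric probability measure supported on $[-D/2,D/2]$ with smooth positive density. Now $\xi\mapsto\int_{-D/2}^{D/2}|\xi-\beta|\,d\nu(\beta)$ is convex, $C^1$, has second derivative $2\nu=f''$, and agrees with $f$ (both equal $|\xi|$) on $[D/2,D]$; hence it equals $f$ throughout $[-D,D]$, i.e.
\[
 h^0(\xi)=\int_{-D/2}^{D/2}\bigl(D-|\xi-\beta|\bigr)\,d\nu(\beta),\qquad \xi\in[-D,D].
\]
Thus Lemma~\ref{l:existence} asserts precisely that the averages $\frac1{n-m}\sum_{j=m}^{n-1}\bigl(D-|\xi-\beta_j|\bigr)$ approach this $\nu$–average at rate $O(1/(n-m))$, uniformly in $\xi$ — a ``super–fast'' equidistribution of the $\beta_j$ with respect to $\nu$, tested against the building blocks $\beta\mapsto|\xi-\beta|$.

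\emph{Reduction to an integrated–discrepancy bound.} Fix $m<n$ and set $G(\xi)=\sum_{j=m}^{n-1}\bigl(D-|\xi-\beta_j|\bigr)-(n-m)h^0(\xi)$; we must bound $\|G\|_\infty$. Since every $\beta_j\in[-D/2,D/2]$ and both $h^0$ and $D-|\cdot-\beta_j|$ are affine on $\{D/2\le|\xi|\le D\}$, the function $G$ is constant on each of $[D/2,D]$ and $[-D,-D/2]$, with values $\sum_j\beta_j$ and $-\sum_j\beta_j$; and on $[-D/2,D/2]$ one computes $G'(\xi)=-2\,\mathcal D_{m,n}(\xi)$, where
\[
 \mathcal D_{m,n}(\xi)=\#\{\,j:m\le j<n,\ \beta_j<\xi\,\}-(n-m)\,\nu\bigl((-D/2,\xi)\bigr)
\]
is the discrepancy of the $\beta_j$'s, $m\le j<n$, relative to $\nu$. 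Consequently $\|G\|_\infty\le 3\sup_\xi\bigl|\int_{-D/2}^{\xi}\mathcal D_{m,n}(t)\,dt\bigr|$, so it suffices to bound this integrated discrepancy uniformly in $m<n$ and $\xi$. Moreover, extending the sequence to negative indices by $\beta_{-1-j}:=\beta_j$, every interval $[m,n)$ is a sum or difference of intervals $[0,N)$, and the integrated discrepancies combine accordingly; hence we may assume $m=0$.

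\emph{Transport to $[0,1]$ and the core estimate.} Let $\Phi\colon[-D/2,D/2]\to[0,1]$, $\Phi(\xi)=\nu\bigl((-D/2,\xi)\bigr)$, which is a $C^\infty$ diffeomorphism with $C^\infty$ inverse $\Psi$; we look for $\beta_j=\Psi(s_j)$ with $\{s_j\}_{j\ge0}\subset[0,1]$. The substitution $v=\Phi(t)$ gives
\[
 \int_{-D/2}^{\xi}\mathcal D_{0,n}(t)\,dt=\int_{0}^{\Phi(\xi)}\bigl(\#\{j<n:s_j<v\}-nv\bigr)\,\Psi'(v)\,dv,
\]
and integrating by parts against the $C^1$ weight $\Psi'$ shows this is bounded as soon as the plain integrated discrepancy
\[
 R_n(V):=\int_0^{V}\bigl(\#\{j<n:s_j<v\}-nv\bigr)\,dv=\sum_{j<n}(V-s_j)^{+}-\frac n2V^{2}
\]
satisfies $\sup_{n\ge1,\ V\in[0,1]}|R_n(V)|<\infty$. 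So the whole lemma is reduced to constructing a sequence $\{s_j\}\subset[0,1]$ whose integrated discrepancy $R_n$ is bounded uniformly in $n$; for this one takes, say, a symmetrized Kronecker sequence $\{j\theta\}$ with $\theta$ badly approximable, or a symmetrized van der Corput sequence (the symmetrization serving only to kill the zeroth Fourier mode, i.e. the first moment $R_n(1)=\frac n2-\sum_{j<n}s_j$).

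\emph{The main obstacle.} The last reduction is where the real work lies: by the theorems of van Aardenne–Ehrenfest, Roth and Schmidt the ordinary discrepancy $\#\{j<n:s_j<v\}-nv$ is of size $\asymp\log n$ for infinitely many $n$, so the required boundedness of $R_n$ can only come from cancellation in its antiderivative. Expanding $R_n$ (equivalently $G$) in a Fourier series, the crude estimate bounding it by $\sum_k|c_k|\cdot\bigl|\sum_{j<n}e^{2\pi ikj\theta}\bigr|$, with $c_k$ the $k$-th Fourier coefficient of the building block $x\mapsto|x|$, does not suffice: one has $|c_k|\asymp k^{-2}$ while the exponential sums can be as large as $\asymp\|k\theta\|^{-1}\asymp|k|$, so the series diverges logarithmically — matching exactly the Schmidt obstruction, which is genuine for characteristic functions. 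One must instead organise the sum along the continued–fraction expansion of $\theta$ (equivalently, along a dyadic/Ostrowski block decomposition of $n$) and exploit cancellation between the coefficients and the exponential sums. This is precisely the analysis of Section~\ref{s:fourier}: Lemma~\ref{l:fourier1} produces a sequence with the required control on these exponential sums, Lemma~\ref{approx-integral} promotes it to a super–fast quadrature rule on a regularity class of functions (one that contains $\beta\mapsto|\xi-\beta|$ but not characteristic functions of intervals), and Lemma~\ref{l:approx-by-mod} applies this with the integrand $\beta\mapsto|\xi-\beta|$ to finish the proof.
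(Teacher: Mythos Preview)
Your reduction is correct and is essentially the paper's own route: writing $h^0$ as a $\nu$--average of the building blocks $D-|\xi-\beta|$, transporting to $[0,1]$ by the diffeomorphism $\Phi=\nu((-D/2,\cdot))$, and then invoking the lemmas of Section~\ref{s:fourier} is exactly what Lemma~\ref{l:approx-by-mod} does (your $\Phi$ is the paper's $\varphi$, your symmetrised Kronecker sequence is the paper's $\alpha_j=2d(j\alpha,\Z)$), so the architecture is right and the proof is complete once you defer to those lemmas.

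Where you go wrong is the ``main obstacle'' paragraph. You assert that the crude Fourier bound
\[
\Bigl|\sum_{j<n} f(x+j\theta)\Bigr|\le\sum_{k\ne 0}|c_k|\cdot\Bigl|\sum_{j<n}e^{2\pi ikj\theta}\Bigr|\le \sum_{k\ne 0}\frac{|c_k|}{2\,d(k\theta,\Z)}
\]
diverges logarithmically when $|c_k|\asymp k^{-2}$, and that one must instead invoke Ostrowski decomposition of $n$ and ``cancellation between the coefficients and the exponential sums''. This is false. The inequality $d(k\theta,\Z)\ge c/|k|$ (Liouville, for $\theta$ badly approximable) gives the \emph{worst} case; it is attained only along the sparse sequence of convergent denominators. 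The paper's Lemma~\ref{l:fourier1} shows, by partitioning $\N$ into level sets $N_l=\{k:d(k\alpha,\Z)\in(2^{-l-1},2^{-l}]\}$ and observing that consecutive elements of $N_l$ are $\ge c\,2^{l-1}$ apart, that
\[
\sum_{k\ge 1}\frac{1}{k^2\,d(k\alpha,\Z)}<\infty
\]
outright. This is a pure size estimate in absolute value --- no cancellation, no Ostrowski blocks, no dependence on $n$. So the ``crude'' bound already suffices; the genuine obstacle you allude to (Schmidt's $\log n$ lower bound) lives at the level of characteristic functions, whose Fourier coefficients decay only like $k^{-1}$, where indeed $\sum k^{-1}d(k\alpha,\Z)^{-1}=\infty$. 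The passage from $k^{-1}$ to $k^{-2}$ decay --- i.e.\ from the raw discrepancy to your integrated discrepancy $R_n$, or equivalently from $\chi_{[0,V]}$ to $(V-\cdot)^+$ --- is exactly the gain that makes the sum converge, and the symmetrisation you mention is what secures this decay by making the periodised test function continuous.
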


The proof of this lemma is given in Section \ref{s:fourier}.
The lemma is proved by analytic methods including Fourier series
and the theory of rational approximations.

Applying Lemma \ref{l:dL} to the sequence constructed in
Lemma \ref{l:existence} yields that the dual profiles
$h^{m,n}$ (determining the distances between levels $m$ and $n$
in our metric graph) satisfy the following inequality:
\be
\label{e:hmn-h0}
 \left| h^{m,n}(\xi) - h^0(\xi) \right| \le \frac C{n-m} .
\ee
Let us show that this inequality implies that the distance $d_L$
is additively close to the norm $\norm^0$. We need the following lemma.

\begin{lemma}
Let $\norm^1$, $\norm^2$ be norms on $\R^2$ such that
$\|e_1\|^1=\|e_1\|^2=D$, and let $h^1$, $h^2$ be their dual profiles.
Then for any $(x,y)\in\R^2$ one has
$$
 \left| \|(x,y)\|^1-\|(x,y)\|^2\right| \le |y|\cdot\sup_{[-D,D]} |h^1-h^2| .
$$
\end{lemma}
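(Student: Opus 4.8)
The plan is to reduce the norm difference to the Legendre transform representation \eqref{e:legendre} and then use the fact that, for convex functions $f$, the maps $f \mapsto h$ and $h \mapsto f$ are both given by infimal convolutions with linear functions, so uniform bounds on $h$ translate into uniform bounds on $f$. Concretely, write $f^1, f^2$ for the functions $f^i(x) = \|(x,1)\|^i$ associated with the two norms as in the proof of Lemma~\ref{l:gluing}. By \eqref{e:legendre} we have $h^i(\xi) = \inf_{x\in\R}\{f^i(x) - \xi x\}$, i.e.\ $h^i$ is (up to sign conventions) the Legendre transform of $f^i$. Since $f^i$ is convex with linear asymptotics of slope $D$ at $\pm\infty$, it is recovered from $h^i$ by the inverse transform $f^i(x) = \sup_{\xi\in[-D,D]}\{h^i(\xi) + \xi x\}$; here the supremum is restricted to $[-D,D]$ precisely because $h^i$ is defined only there and, by the asymptotics, the supremum over all of $\R$ is attained in $[-D,D]$.

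The key step is then the elementary observation that taking a supremum (or infimum) over a common index set is $1$-Lipschitz with respect to the sup-norm of the integrand: if $|h^1(\xi) - h^2(\xi)| \le \eps$ for all $\xi\in[-D,D]$, then for every fixed $x$,
$$
 \bigl| f^1(x) - f^2(x)\bigr|
 = \Bigl| \sup_{\xi\in[-D,D]}\{h^1(\xi)+\xi x\} - \sup_{\xi\in[-D,D]}\{h^2(\xi)+\xi x\}\Bigr|
 \le \sup_{\xi\in[-D,D]} \bigl| h^1(\xi)-h^2(\xi)\bigr| = \eps
$$
with $\eps := \sup_{[-D,D]}|h^1 - h^2|$. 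Finally, by positive homogeneity, for any $(x,y)$ with $y>0$ we have $\|(x,y)\|^i = y\, f^i(x/y)$, hence $|\|(x,y)\|^1 - \|(x,y)\|^2| = y\,|f^1(x/y) - f^2(x/y)| \le y\eps$; for $y<0$ one uses $\|(x,y)\|^i = \|(-x,-y)\|^i = |y| f^i(-x/y)$ to get the factor $|y|$; and the case $y=0$ is immediate since $\|(x,0)\|^i = D|x|$ is the same for both norms. Combining these gives the claimed bound $|\|(x,y)\|^1 - \|(x,y)\|^2| \le |y|\sup_{[-D,D]}|h^1-h^2|$.

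I do not expect a serious obstacle here; the only point requiring a little care is the justification that the inverse Legendre transform can be taken over the bounded interval $[-D,D]$ rather than all of $\R$, which follows from the linear asymptotics $f^i(x)\sim Dx$ as $x\to+\infty$ (and $\sim -Dx$ as $x\to-\infty$): for $|\xi|>D$ the function $x\mapsto h^i(\xi)+\xi x$ would have to be replaced by $-\infty$ in the extended definition of $h^i$, so it never contributes to the supremum. An alternative, entirely self-contained route avoids the word ``Legendre'' altogether: one directly verifies that $\|(x,y)\|^i = \sup_{\xi\in[-D,D]}\{\xi x + h^i(\xi)|y|\}$ for $y\ge 0$ (this is just the statement that $B^*$ lies between the lines $\xi=\pm D$ and the graphs $\eta = h^i(\xi)$, $\eta = -h^i(-\xi)$, unwound via the definition of the dual body), symmetrizes in $y$, and then applies the same $1$-Lipschitz-supremum estimate. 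Either way the proof is two or three lines once the representation formula is in place.
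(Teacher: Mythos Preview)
Your proof is correct and follows essentially the same argument as the paper. The paper takes what you call the ``alternative, entirely self-contained route'': it directly writes $\|(x,y)\| = \sup_{(\xi,\eta)\in B^*}\{\xi x+\eta y\} = \sup_{\xi\in[-D,D]}\{\xi x + h(\xi)y\}$ for $y>0$ from the definition of the dual body and the dual profile, then applies the same $1$-Lipschitz estimate for suprema over a common index set; your main route via $f^i$ and biconjugation is just a repackaging of this, with the extra step of justifying the inverse Legendre transform that the paper's formulation avoids.
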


\begin{proof}
If $y=0$, then $ \|(x,y)\|^1=\|(x,y)\|^2=D|x|$.
Therefore we may assume that $y>0$.
Every norm $\norm$ is expressed via
its dual profile $h=h_\norm$ as follows.
First observe that
$$
 \|(x,y)\| = \sup_{(\xi,\eta)\in B^*} \{ \xi x+\eta y\} 
$$
where $B^*$ is dual to the unit ball of $\norm$.
If $y>0$, the right-hand side equals
$$
 \sup_{\xi\in[-D,D]} \{ \xi x + h(\xi) y \} .
$$
Therefore
$$
\begin{aligned}
\left| \|(x,y)\|^1-\|(x,y)\|^2\right| 
&=\left|\sup_{\xi\in[-D,D]} \{ \xi x + h^1(\xi) y \} - \sup_{\xi\in[-D,D]} \{ \xi x + h^2(\xi) y \} \right| \\
&\le \sup_{\xi\in[-D,D]} \{| h^1(\xi) y-h^2(\xi)y|\} .
\end{aligned}
$$
The lemma follows.
\end{proof}

This lemma and \eqref{e:hmn-h0} imply that
for any points $p=(x_1,m)$ and $q=(x_2,n)$
with $m,n\in\Z$ and $m<n$,
one has
$$
 \left| \|p-q\|^{m,n} - \|p-q\|^0 \right| \le C .
$$
This and Lemma \ref{l:dL} imply that our graph metric $d_L$
is additively close to $\|\cdot\|^0$.
Recall that $\|(x,y)\|^0 \ge |(x,y)|$ for all $(x,y)\in\R^2$
and $\|(x,y)\|^0 = |(x,y)|$ if $|x|\le|y|$.

Thus we have constructed a uniform graph on $L$ 
such that its distance function $d_L$ is additively close to the Euclidean metric
on vectors with $|y| \geq |x|$ and no  less than the Euclidean distance
minus some constant $C$ for all vectors. Similarly, one can construct a graph 
with vertices in $L'$ whose distance is additively close to the Euclidean one on vectors with $|x|\geq |y|$ 
and also no less than Euclidean distance minus $C$ for all vectors. Now we ``glue'' the two
graphs by choosing a sufficiently large number $M$ (e.g., $M=2C+1$ works) and connecting $(i,j)$ with $(i, j+1)$ 
by an edge of length $M$ for all $(i,j) \in L$. One easily sees that the distance function of the resulting 
graph is additively close to the Euclidean one. This completes the proof of Theorem~\ref{main}.

\begin{remark}
\label{pasting}
The above construction is a special case of the following general observation. If one has a finite collection 
of uniform graphs $\Gamma_i$ such the distance function of each $\Gamma_i$ is additively close to metrics of 
some norms $\norm_i$, then there exists a uniform graph $\Gamma$ whose distance function is additively close 
to the metric of the norm whose unit ball is the convex hull of the unit balls of $\norm_i$. This graph
is easily constructed by connecting sufficiently close vertices 
 of $\Gamma_i$'s by edges of a sufficiently 
 large fixed length. 
 \end{remark}

\section{Approximation of functions in one variable}
\label{s:fourier}

\begin{lemma}
\label{l:fourier1}
Let $f\colon S^1=\R/\Z\to \R$ be a piecewise $C^1$ function
with $\int f=0$ and ${V(f')\le M<\infty}$
where $V(f')$ denotes the variation of the derivative $f'$
on $S^1$.
Let $\alpha$ be a quadratic irrational.
Then for every positive integer $n$ and every $x\in S^1$, one has
$$
 \left |\sum_{j=0}^{n-1} f(x+j\alpha) \right| \le C(\alpha)\cdot M
$$
for some constant $C(\alpha)$.
\end{lemma}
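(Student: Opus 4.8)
The plan is to expand $f$ in its Fourier series and exploit the fact that the exponential sums $\sum_{j=0}^{n-1} e^{2\pi i k j\alpha}$ can be bounded in terms of how well $k\alpha$ is approximated by integers, which for a quadratic irrational $\alpha$ is governed by the classical fact that $\alpha$ has bounded partial quotients (Lagrange's theorem). First I would write $f(t)=\sum_{k\neq 0} c_k e^{2\pi i k t}$; the hypothesis $\int f=0$ removes the $k=0$ term, and the hypothesis $V(f')\le M$ gives the decay estimate $|c_k|\le C\,V(f')/k^2 \le CM/k^2$ via two integrations by parts (the piecewise $C^1$ assumption makes this rigorous, with the boundary terms across jump points of $f'$ absorbed into the variation). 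Then
$$
\sum_{j=0}^{n-1} f(x+j\alpha) = \sum_{k\neq 0} c_k e^{2\pi i k x} \sum_{j=0}^{n-1} e^{2\pi i k j\alpha},
$$
and summing the geometric progression gives $\bigl|\sum_{j=0}^{n-1} e^{2\pi i k j\alpha}\bigr| \le \min\{n,\ \|k\alpha\|^{-1}\} \le \|k\alpha\|^{-1}$, where $\|\cdot\|$ denotes distance to the nearest integer. So it suffices to bound $\sum_{k\ge 1} k^{-2}\|k\alpha\|^{-1}$.

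The heart of the matter is this last series, and here I would invoke the arithmetic of $\alpha$. Since $\alpha$ is a quadratic irrational, by Lagrange's theorem its continued fraction expansion is eventually periodic, hence the partial quotients $a_m$ are bounded by some constant depending on $\alpha$; equivalently there is a constant $c_0=c_0(\alpha)>0$ with $\|k\alpha\|\ge c_0/k$ for all $k\ge 1$ (the Markov/Hurwitz-type lower bound for badly approximable numbers). This turns the tail into $\sum_{k\ge 1} k^{-2}\cdot k/c_0 = c_0^{-1}\sum_{k\ge 1} k^{-1}$, which \emph{diverges} — so the crude bound $|c_k|\le CM/k^2$ alone is not quite enough, and one must be slightly more careful. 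The standard fix is to group the frequencies dyadically: for $k$ in a block $[2^{\ell},2^{\ell+1})$ one uses the three-distance (Steinhaus) structure of the points $\{k\alpha\}$, namely that among any $N$ consecutive multiples $\alpha,2\alpha,\dots,N\alpha$ the values $\|k\alpha\|$ are spread out so that $\sum_{k=1}^{N} \|k\alpha\|^{-1} \le C(\alpha) N\log N$; combined with $|c_k|\le CM\,2^{-2\ell}$ on the block and $N\sim 2^\ell$ this gives a per-block bound $\lesssim C(\alpha)M\, 2^{-2\ell}\cdot 2^\ell \log 2^\ell = C(\alpha)M\,\ell\, 2^{-\ell}$, and $\sum_\ell \ell 2^{-\ell}<\infty$. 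Alternatively, and perhaps more cleanly, one partial-sums: write $\sum_{k\ge1} c_k e^{2\pi ikx}\sum_{j<n} e^{2\pi ikj\alpha}$ and use Abel summation together with the bound $\sum_{k\le K}\bigl|\sum_{j<n} e^{2\pi ikj\alpha}\bigr|\le C(\alpha)K\log K$ (again from bounded partial quotients) against the fact that $|c_k|$ together with $\sum_{k\ge K}|c_k|\le CM/K$ and the monotone-type behavior of the coefficient bound.

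In all of these routes the final estimate is independent of $n$ and of $x$ and is linear in $M$, with the constant $C(\alpha)$ depending only on the supremum of the partial quotients of $\alpha$ (hence only on $\alpha$), which is exactly the claimed inequality. The main obstacle, and the only genuinely nontrivial input, is the arithmetic fact that a quadratic irrational is badly approximable — i.e.\ controlling $\|k\alpha\|$ from below uniformly, or equivalently controlling the small-denominator sums $\sum_{k\le K}\|k\alpha\|^{-1}$ — everything else (the Fourier decay from $V(f')\le M$, the geometric-sum bound, and the dyadic or Abel bookkeeping) is routine. I would therefore state the continued-fraction input as a preliminary sublemma (bounded partial quotients $\Rightarrow$ $\|k\alpha\|\ge c_0(\alpha)/k$, and $\sum_{k\le K}\|k\alpha\|^{-1}\le C(\alpha)K\log K$), prove the coefficient decay $|c_k|\le V(f')/(2\pi k)^2$, and then assemble the three ingredients as above; the verification that the resulting series converges to a bound of the form $C(\alpha)M$ is then a short computation.
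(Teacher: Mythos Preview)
Your proposal is correct and follows the same architecture as the paper's proof: expand $f$ in Fourier series, use $V(f')\le M$ to get $|c_k|\lesssim M/k^{2}$, bound the geometric sum by $\|k\alpha\|^{-1}$, and reduce the lemma to showing that $\sum_{k\ge 1} k^{-2}\|k\alpha\|^{-1}<\infty$ for badly approximable~$\alpha$. You also correctly observe that the crude substitution $\|k\alpha\|\ge c_0/k$ gives a divergent harmonic series, so some extra structure is needed.

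The only real difference is in how that convergence is established. You partition dyadically in $k$ and invoke the estimate $\sum_{k\le N}\|k\alpha\|^{-1}\le C(\alpha)\,N\log N$ (a standard consequence of bounded partial quotients, though you quote rather than prove it), obtaining a per-block contribution $\lesssim M\,\ell\,2^{-\ell}$. The paper instead partitions dyadically in $\|k\alpha\|$: setting $N_l=\{k:2^{-l-1}<\|k\alpha\|\le 2^{-l}\}$, it uses only the Liouville bound $\|k\alpha\|\ge c/k$ to show that every $k\in N_l$ satisfies $k\ge c\,2^{l}$ and that distinct elements of $N_l$ are separated by at least $c\,2^{l-1}$, whence $\sum_{k\in N_l}k^{-2}\|k\alpha\|^{-1}\lesssim 2^{-l}/c^{2}$, and summing over $l$ finishes. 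The paper's route is thus entirely self-contained from Liouville's theorem, with no auxiliary $N\log N$ lemma; yours is a line shorter once that lemma is granted. Your Abel-summation alternative is sketched too loosely to assess, but the dyadic-in-$k$ argument already suffices.
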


\begin{proof}
Consider the Fourier series $f(x)=\sum_{k\in\Z} a_k e^{2\pi i kx}$ for $f$.
Note that ${a_0=0}$ since $\int f=0$.
Since $f'$ is of bounded variation, the Fourier series converges uniformly
and moreover $|a_k|\le M/k^2$, see e.g.\ \cite[Ch.II,\S4]{Z}.
The Fourier series for the sum in the left-hand side of the desired inequality
has the form
$$
  \sum_{j=0}^{n-1} f(x+j\alpha)
   = \sum_{k\in\Z\setminus\{0\}} a_k \sum_{j=0}^{n-1} e^{2\pi i k(x+j\alpha)}
   =   \sum_{k\in\Z\setminus\{0\}} b_k(n) e^{2\pi ikx}
$$
where
$$
   b_k(n) = a_k\sum_{j=0}^{n-1} \left(e^{2\pi i k\alpha}\right)^j
   = a_k \cdot \frac{1-e^{2\pi i k\alpha n}}{1-e^{2\pi i k\alpha}} .
$$
Therefore
$$
 \left |\sum_{j=0}^{n-1} f(x+j\alpha) \right|  \le \sum_{k\in\Z\setminus\{0\}} |b_k(n)e^{2\pi ikx}|
  = \sum_{k\in\Z\setminus\{0\}} |b_k(n)| .
$$
Thus it suffices to prove that
$$
 \sum_{k\in\Z\setminus\{0\}} |b_k(n)| \le C(\alpha)\cdot M
$$
for all $n$.

For $t\in\R$, we denote by $d(t,\Z)$ the distance from $t$ to the nearest integer.
Note that $d(-t,\Z)=d(t,\Z)$.
One easily sees that $|1-e^{2\pi it}|\ge 4d(t,\Z)$ for all $t\in\R$.
Substituting $t=k\alpha$ yields $|1-e^{2\pi ik\alpha}|\ge 4d(k\alpha,\Z)$.
Since $|a_k|\le M/k^2$, it follows that
$$
 |b_k(n)| \le \frac{2a_k}{|1-e^{2\pi ik\alpha}|}  \le \frac{M}{2k^2\cdot d(k\alpha,\Z)}.
$$
It remains to prove that
\begin{equation}
\label{finite-sum}
 \sum_{k=1}^\infty \frac 1{k^2\cdot d(k\alpha,\Z)} < \infty .
\end{equation}
Indeed, the left-hand side depends on $\alpha$ only, so if it is finite then
we can just denote it by $C(\alpha)$ and the lemma follows.

Since $\alpha$ is a quadratic irrational,
Liouville's Approximation Theorem asserts that
$$
 \left|\alpha - \frac pk\right| > \frac{c}{k^2}
$$
for some constant $c=c(\alpha)>0$ and all $p,k\in \Z$. Multiplying this by $k$ we get
\begin{equation}
\label{liouville}
 d(k\alpha,\Z) = \min_{p\in\Z} |k\alpha-p| > \frac ck
\end{equation}
for every positive integer $k$.

Consider the partition of $\N$ into sets $N_l$, $l=1,2,\dots$,
defined by
$$
N_l = \{k\in\N: 2^{-l-1}<d(k\alpha,\Z)\le 2^{-l}\} .
$$
For every $k\in N_l$, \eqref{liouville} implies that
$$
 k > \frac c{d(k\alpha,\Z)} \ge c\cdot 2^l .
$$
For any distinct $k_1,k_2\in N_l$, we have
$$
 d(k_1\alpha-k_2\alpha,\Z) \le  d(k_1\alpha,\Z)+ d(k_2\alpha,\Z) \le 2^{1-l} .
$$
On the other hand, applying \eqref{liouville} to $|k_1-k_2|$ in place of $k$
yields
$$
 d(k_1\alpha-k_2\alpha,\Z) = d(|k_1-k_2|\alpha,\Z) \ge \frac c{|k_1-k_2|} ,
$$
therefore
$|k_1-k_2| \ge c\cdot 2^{l-1}$
for any distinct $k_1,k_2\in N_l$.
Thus the $n$th smallest element of the set $N_l$
is bounded below by $cn\cdot 2^{l-1}$, hence
$$
 \sum_{k\in N_l} \frac 1{k^2\cdot d(k\alpha,\Z)}
  \le 2^{l+1}\sum_{k\in N_l} \frac 1{k^2}
  \le  2^{l+1}\sum_{n=1}^\infty \frac 1{(cn\cdot 2^{l-1})^2}
  = \frac 8{2^l c^2}\cdot\frac{\pi^2}{6}
$$
where $\frac{\pi^2}{6}$ denotes the sum of the series $\sum_{n=1}^\infty \frac1{n^2}$.
Summing these inequalities for $l=1,2,\dots$, we get
$$
  \sum_{k\in\N} \frac 1{k^2\cdot d(k\alpha,\Z)} \le \frac{8}{c^2}\cdot\frac{\pi^2}{6} < \infty .
$$
This completes the proof of \eqref{finite-sum} and hence of the lemma.
\end{proof}

\begin{lemma}
\label{approx-integral}
There exists a constant $C>0$ and a sequence $\{\alpha_j\}_{j=-\infty}^\infty$
of points in $[0,1]$ such that the following holds.
For every piecewise smooth function ${f\colon [0,1]\to\R}$
and every $m,n\in\Z$ such that $m<n$, one has
$$
 \left| \sum_{j=m}^{n-1} f(\alpha_j) - (n-m)\int_0^1 f \right| \le C (V_0^1(f') + |f'(0)| + |f'(1)|).
$$
Here $V_0^1(f')$ denotes the variation of $f'$ on $[0,1]$.
\end{lemma}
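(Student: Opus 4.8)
The plan is to reduce this to Lemma~\ref{l:fourier1} by periodizing. Given a piecewise smooth $f\colon[0,1]\to\R$, the obstruction to extending it to a piecewise $C^1$ function on $S^1=\R/\Z$ is a mismatch at the endpoints: both a jump $f(1)-f(0)$ in the value and a jump in the derivative. First I would subtract off an explicit ``correction'' function that absorbs these jumps. Concretely, let $L(x)$ be the unique affine function with $L(0)=f(0)$, $L(1)=f(1)$, and let $Q(x)=\tfrac12 x(x-1)$, so that $Q(0)=Q(1)=0$ and $Q'(1)-Q'(0)=1$. Set
$$
 g(x) = f(x) - L(x) - \big(f'(1)-f'(0) - (L'(1)-L'(0))\big)\,Q(x) - \text{const},
$$
choosing the constant so that $\int_0^1 g = 0$. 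By construction $g$ extends to a $1$-periodic piecewise $C^1$ function with $\int_{S^1} g = 0$, and its derivative has total variation on $S^1$ bounded by $V_0^1(f') + C'\big(|f'(0)|+|f'(1)|\big)$ for a universal $C'$ (the periodization introduces no new jump in $g'$ by the choice of $Q$, and $|f(1)-f(0)|$ is itself controlled by $|f'(0)|+V_0^1(f')$, hence by the right-hand side; one should double-check that all the correction terms are genuinely dominated by $V_0^1(f') + |f'(0)| + |f'(1)|$, which is the only place a little care is needed).

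Next, take $\alpha$ to be any fixed quadratic irrational (say $\alpha = \sqrt2 - 1$) and set $\alpha_j = \{j\alpha\}\in[0,1)$ — the fractional part — so the sequence is a single orbit of rotation by $\alpha$. Then
$$
 \sum_{j=m}^{n-1} g(\alpha_j) = \sum_{j=0}^{n-m-1} g\big(\{(j+m)\alpha\}\big) = \sum_{j=0}^{n-m-1} \tilde g(m\alpha + j\alpha)
$$
where $\tilde g$ is the $1$-periodic extension of $g$, and Lemma~\ref{l:fourier1} applied with $x = m\alpha$ and $n-m$ steps gives $\bigl|\sum_{j=m}^{n-1} g(\alpha_j)\bigr| \le C(\alpha)\cdot V_{S^1}(\tilde g')$. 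Combining with the variation bound from the previous paragraph yields the estimate for $g$ with the claimed form of the right-hand side.

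Finally I would undo the correction. Since $L$ is affine, $\sum_{j=m}^{n-1} L(\alpha_j) - (n-m)\int_0^1 L = L'(0)\big(\sum_{j=m}^{n-1}\alpha_j - (n-m)\int_0^1 x\,dx\big)$, and the discrepancy $\bigl|\sum_{j=m}^{n-1}\alpha_j - \tfrac{n-m}{2}\bigr|$ is bounded by a constant depending only on $\alpha$ — this is the classical bounded-remainder property of the rotation by a quadratic irrational, which in fact follows from Lemma~\ref{l:fourier1} itself applied to the sawtooth function $x\mapsto x-\tfrac12$ on $S^1$ (this function is only piecewise $C^1$ with a single jump, so its $g'$ has finite variation and the lemma applies verbatim). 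Thus the $L$ term contributes at most $C''(\alpha)|f'(0)|$. The $Q$-term contributes its coefficient — which is $\le |f'(1)| + |f'(0)| + |L'(1)-L'(0)| \le 2(|f'(0)|+|f'(1)|) + V_0^1(f')$ up to constants — times the bounded quantity $\bigl|\sum_{j=m}^{n-1} Q(\alpha_j) - (n-m)\int_0^1 Q\bigr|$, which is again $O(1)$ by Lemma~\ref{l:fourier1} applied to $Q - \int Q$ (a genuine $C^\infty$ function on $S^1$ once periodized, since $Q(0)=Q(1)$ and $Q'(0)=Q'(1)=-\tfrac12$... in fact $Q'(1)=\tfrac12\neq Q'(0)=-\tfrac12$, so $Q'$ has a jump of size $1$ on $S^1$, still of bounded variation, so the lemma still applies). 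Adding the three bounded contributions and the bound for $g$ gives the desired inequality with a single constant $C$ depending only on the fixed $\alpha$.

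The main obstacle is purely bookkeeping: making sure every correction term's coefficient is legitimately bounded by $V_0^1(f') + |f'(0)| + |f'(1)|$ and that the periodized functions $\tilde g$, the sawtooth, and $\tilde Q$ all have derivatives of bounded variation on $S^1$ so that Lemma~\ref{l:fourier1} applies to each. There is no real analytic difficulty beyond Lemma~\ref{l:fourier1}; the content is in choosing the right three-term decomposition $f = L + (\text{coeff})\cdot Q + g + \text{const}$ so that $g$ periodizes cleanly.
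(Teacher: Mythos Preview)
Your decomposition is a different route from the paper's, and it has a real gap in the handling of the affine term $L$.

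With your choice $\alpha_j=\{j\alpha\}$ you need $\bigl|\sum_{j=m}^{n-1}\{j\alpha\}-\tfrac{n-m}{2}\bigr|$ to be bounded uniformly in $m,n$. You try to obtain this from Lemma~\ref{l:fourier1} applied to the sawtooth $x\mapsto x-\tfrac12$ on $S^1$, but that lemma does not cover the sawtooth: its proof rests on the Fourier decay $|a_k|\le V(f')/k^2$, valid for \emph{continuous} piecewise $C^1$ functions on the circle, whereas the sawtooth has a jump discontinuity and $|a_k|\sim 1/|k|$. With that decay the series $\sum_{k}\frac{1}{|k|\,d(k\alpha,\Z)}$ arising in the proof diverges (by Liouville each term is at least $c(\alpha)^{-1}$). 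Worse, the conclusion you want is actually false: for a quadratic irrational the sawtooth Birkhoff sums $\sum_{j=0}^{n-1}(\{j\alpha\}-\tfrac12)$ are \emph{not} uniformly bounded. The formal solution of the cohomological equation has Fourier coefficients $\hat u(k)\sim\bigl(k\,d(k\alpha,\Z)\bigr)^{-1}$, and since $d(q_m\alpha,\Z)\asymp q_m^{-1}$ along the convergent denominators these do not even lie in $\ell^2$; the sums in fact grow on the order of $\log n$ along a subsequence. So with $\alpha_j=\{j\alpha\}$ the $L$-contribution is genuinely unbounded and the three-term decomposition cannot be repaired by bookkeeping.

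The paper bypasses all of this with a different choice of sequence. It sets $\alpha_j=2\,d(j\alpha,\Z)$, the \emph{folded} orbit. Then $f(\alpha_j)=g(\{j\alpha\})$ where $g(x)=f_0(2x)$ on $[0,\tfrac12]$ and $g(x)=f_0(2-2x)$ on $[\tfrac12,1]$, with $f_0=f-\int_0^1 f$. The reflection forces $g(0)=g(1)=f_0(0)$, so $\bar g$ is automatically continuous on $S^1$ and a single application of Lemma~\ref{l:fourier1} finishes the proof. The derivative jumps of $\bar g$ at $0$ and $\tfrac12$ have size $4|f'(0)|$ and $4|f'(1)|$, which is exactly where those two extra terms in the statement come from. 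No affine or quadratic correction, no sawtooth, no side estimates --- the symmetrization does all the work that your $L$ and $Q$ were meant to do, but without ever leaving the class of functions to which Lemma~\ref{l:fourier1} applies.
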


\begin{proof}
Fix a quadratic irrational $\alpha$ and define
$$
 \alpha_j = 2d(j\alpha,\Z)=\begin{cases}
  2\{j\alpha\} &\text{if } \{j\alpha\}\le 1/2, \\
  2-2\{j\alpha\} &\text{if }\{j\alpha\}\ge 1/2,
 \end{cases}
$$
where $\{j\alpha\}$ denotes the fractional part of $j\alpha$.
We claim that this sequence works.

Let $A=\int_0^1 f$ and $f_0=f-A$, then $\int_0^1 f_0=0$.
Define $g\colon[0,1]\to\R$ by
$$
 g(x) = \begin{cases}
  f_0(2x) &\text{if }0\le x\le 1/2, \\
  f_0(2-2x) &\text{if }1/2\le x\le 1 .
 \end{cases}
$$
Observe that $g(0)=g(1)$. Therefore $g$ (unlike $f$)
descends to a continuous and hence piecewise smooth
function $\bar g$ on the circle $\R/\Z$.
From definitions by term-by-term comparison
we get
$$
  \sum_{j=m}^{n-1} f(\alpha_j) - (n-m)\int_0^1 f
  =\sum_{j=m}^{n-1} f_0(\alpha_j) = \sum_{j=m}^{n-1} g(\{j\alpha\})
 =\sum_{j=m}^{n-1} \bar g(j\alpha) .
$$
Since $\int\bar g=0$, the previous lemma implies that
$$
 \left|\sum_{j=m}^{n-1} \bar g(j\alpha)\right| 
 =\left|\sum_{j=0}^{n-m-1} \bar g(m\alpha+j\alpha)\right|
  \le C(\alpha) V(\bar g')
   = 4C(\alpha) (V_0^1(f') + |f'(0)| + |f'(1)|) .
$$
The lemma follows.
\end{proof}

\begin{lemma}
\label{l:approx-by-mod}
Let $h\colon[a,b]\to\R$ be a smooth function such that $h''>0$
everywhere on $[a,b]$, $h'(a)=-1$, $h'(b)=1$
and $h(a)+h(b)=b-a$.
Then there exists a constant $C$ and a sequence $\{\beta_j\}_{j=-\infty}^\infty$ of points in $[a,b]$
such that for  every $x\in[a,b]$ and every $m,n\in\Z$ such that $m<n$, one has
$$
 \left|\sum_{j=m}^{n-1} |x-\beta_j| -(n-m)h(x)\right| \le C .
$$
\end{lemma}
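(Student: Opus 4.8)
The plan is to first write $h$ itself as a superposition of the model functions $x\mapsto|x-t|$, and then apply Lemma~\ref{approx-integral} after a change of variable that turns this superposition into an ordinary integral over $[0,1]$. The starting point is the identity
$$
 h(x) = \frac12\int_a^b |x-t|\,h''(t)\,dt, \qquad x\in[a,b],
$$
which follows by integrating $\int_a^b|x-t|h''(t)\,dt$ by parts twice and simplifying — and the simplification uses exactly the three hypotheses $h'(a)=-1$, $h'(b)=1$, $h(a)+h(b)=b-a$, which are precisely what makes the affine correction terms cancel. Since $h''>0$ and $\frac12\int_a^b h'' = \frac12(h'(b)-h'(a)) = 1$, the measure $d\mu(t)=\frac12 h''(t)\,dt$ is a Borel probability measure on $[a,b]$ with smooth, strictly positive density, and $h(x)=\int_a^b|x-t|\,d\mu(t)$.

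Next I would transport $\mu$ to Lebesgue measure on $[0,1]$. Let $F(t)=\mu([a,t])=\frac12(h'(t)-h'(a))$; this is a smooth strictly increasing bijection $[a,b]\to[0,1]$, so $\psi:=F^{-1}\colon[0,1]\to[a,b]$ is smooth and strictly increasing with $\psi(0)=a$, $\psi(1)=b$ and $\psi'=2/h''(\psi)$. Because $h''$ is continuous and positive on the compact interval $[a,b]$, the derivative $\psi'$ is bounded between two positive constants and, being smooth on $[0,1]$, has finite total variation $V_0^1(\psi')$. By construction $\psi$ pushes Lebesgue measure on $[0,1]$ forward to $\mu$, so
$$
 h(x)=\int_0^1 |x-\psi(s)|\,ds \qquad\text{for all } x\in[a,b].
$$
Now let $\{\alpha_j\}_{j\in\Z}\subset[0,1]$ be the sequence furnished by Lemma~\ref{approx-integral}, and set $\beta_j=\psi(\alpha_j)\in[a,b]$.

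Finally I would fix $x\in[a,b]$, put $f_x(s)=|x-\psi(s)|$, and apply Lemma~\ref{approx-integral} to $f_x$. This is legitimate: $f_x$ is piecewise smooth on $[0,1]$ (smooth except for at most one corner), $\int_0^1 f_x = h(x)$, and $\sum_{j=m}^{n-1} f_x(\alpha_j) = \sum_{j=m}^{n-1}|x-\beta_j|$, so the lemma yields
$$
 \Bigl|\sum_{j=m}^{n-1}|x-\beta_j| - (n-m)h(x)\Bigr| \le C\bigl(V_0^1(f_x') + |f_x'(0)| + |f_x'(1)|\bigr).
$$
The one point requiring care — and the only place where anything beyond bookkeeping happens — is bounding the right-hand side uniformly in $x$. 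Away from the at most one point $s_0$ with $\psi(s_0)=x$ one has $f_x'(s)=\mathrm{sgn}(\psi(s)-x)\,\psi'(s)=\pm\psi'(s)$; thus $f_x'$ agrees with $\pm\psi'$ on each of $[0,s_0]$ and $[s_0,1]$ and has a jump of size $2\psi'(s_0)$ at $s_0$, whence $V_0^1(f_x')\le V_0^1(\psi')+2\|\psi'\|_\infty$ and $|f_x'(0)|,|f_x'(1)|\le\|\psi'\|_\infty$. Therefore the right-hand side above is at most $C\bigl(V_0^1(\psi')+4\|\psi'\|_\infty\bigr)$, a constant depending only on $h$, and the lemma follows. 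I expect the main obstacle to be nothing computational but rather recognizing the integral representation of the first step (and seeing that it is exactly the given boundary conditions that make it work), together with the observation that the corner of $|x-\psi(s)|$ adds only the bounded amount $2\psi'(s_0)$ to the variation; the remainder is a change of variables plus the citation of Lemma~\ref{approx-integral}.
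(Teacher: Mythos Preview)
Your proof is correct and follows essentially the same route as the paper: your map $F(t)=\tfrac12(h'(t)-h'(a))=\tfrac12(h'(t)+1)$ is exactly the paper's $\varphi$, your $\psi=F^{-1}$ is their $\varphi^{-1}$, and both arguments reduce to applying Lemma~\ref{approx-integral} to $f_x(s)=|x-\psi(s)|$ after verifying $\int_0^1 f_x=h(x)$ via the same integration by parts. The only differences are order of exposition (you derive the integral representation of $h$ first, the paper checks it last) and that you spell out the uniform bound $V_0^1(f_x')\le V_0^1(\psi')+2\|\psi'\|_\infty$, whereas the paper simply asserts that smoothness of $\varphi^{-1}$ gives a uniform bound.
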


\begin{proof}
Define a map $\varphi\colon[a,b]\to\R$ by
$$
 \varphi(t) = \frac12\big(h'(t)+1\big) .
$$
The assumptions that $h''>0$, $h'(a)=-1$ and $h'(b)=1$
imply that $\varphi$ is a diffeomorphism from $[a,b]$ onto $[0,1]$.
Let $\beta_j=\varphi^{-1}(\alpha_j)$ where $\{\alpha_j\}$
is the sequence from the previous lemma.
Then, for every $x\in[a,b]$ we can write
$$
 \sum_{j=m}^{n-1} |x-\beta_j| = \sum_{j=m}^{n-1} f_x(\alpha_j)
$$
where $f_x$ is a function on $[0,1]$ defined by
$$
 f_x(y) = |x-\varphi^{-1}(y) |.
$$
Since $\varphi^{-1}$ is smooth, the variations $V_0^1(f_x')$
are bounded above by some constant $C_0$ which is
independent of $x$. Therefore, by the previous lemma,
$$ 
 \left|\sum_{j=m}^{n-1} f_x(\alpha_j)-(n-m)\int_0^1 f_x\right| \le C_1\cdot C_0 =: C
$$
where $C_1$ is the constant from the previous lemma.
To complete the proof, it now suffices to show that
$$
 \int_0^1 f_x = h(x)
$$
for every $x\in[a,b]$.
This is shown by the following computation:
\begin{align*}
 \int_0^1 &f_x(y)\,dy = \int_0^1  |x-\varphi^{-1}(y) |\,dy \\
 &=\int_a^b {|x-t|}\,\varphi'(t)\,dt \qquad\qquad\text{by substitution $y=\varphi(t)$}\\
 &=\frac12\int_a^b {|x-t|}\,h''(t)\,dt \qquad\qquad\text{by the definition of $\varphi$}\\
 &=\frac12\left(\int_a^x (x-t)h''(t)\,dt + \int_x^b(t-x)h''(t)\,dt\right) \\
 &=\frac12\left(\int_0^x h'(t)\,dt -(x-a)h'(a) - \int_x^b h'(t)\,dt + (b-x)h'(b)   \right) 
 \qquad\text{by parts} \\
 &= \frac12\bigg(  h(x)-h(a) +(x-a) -h(b) + h(x) +(b-x)\bigg)
 =h(x) .
\end{align*}
The last two equalities employ the assumptions that $h'(a)=-1$, $h'(b)=1$
and $h(a)+h(b)=b-a$.
\end{proof}

Now we are in a position to prove Lemma \ref{l:existence}.

\begin{proof}[Proof of Lemma \ref{l:existence}]
We apply Lemma \ref{l:approx-by-mod}
to the interval $[a,b]=[-\frac{\sqrt2}2,\frac{\sqrt2}2]$
and the function $h$ given by
$$
 h(x) = \sqrt2 - \sqrt{1-x^2}, \qquad t\in[-\sqrt2/2,\sqrt2/2] .
$$
In is easy to see that these data satisfy the assumptions
of Lemma \ref{l:approx-by-mod}.
Hence there exist a constant $C>0$ and a sequence
$\{\beta_j\}\subset[-\frac{\sqrt2}2,\frac{\sqrt2}2]$ such that
$$
\left| (n-m) h(x)-\sum_{j=m}^{n-1} |x-\beta_j|\right| \le C
$$
for all $x\in[-\frac{\sqrt2}2,\frac{\sqrt2}2]$
and all $m,n\in\Z$ such that $m<n$.
Hence
$$
 \left|(n-m) (\sqrt2-h(x))-\sum_{j=m}^{n-1} (\sqrt2-|x-\beta_j|) \right| \le C
$$
or, equivalently,
$$
 \left|(n-m)\sqrt{1-x^2}-\sum_{j=m}^{n-1} (\sqrt2-|x-\beta_j|) \right| \le C
$$
for all $x\in[-\frac{\sqrt2}2,\frac{\sqrt2}2]$
and all $m,n\in\Z$ such that $m<n$.

On the intervals $[\frac{\sqrt2}2,\sqrt2]$ and $[-\sqrt2,-\frac{\sqrt2}2]$,
the functions $x\mapsto \sqrt2-|x-\beta_j|$ are linear with slopes
$-1$ and $1$, respectively.
Therefore
$$
 \left|(n-m) h^0(x)-\sum_{j=m}^{n-1} (\sqrt2-|x-\beta_j|) \right| \le C
$$
for all $x\in[-\sqrt2,\sqrt2]$,
where the function $h^0$ is defined by \eqref{e:defh0}.
\end{proof}

\section{The hyperbolic case}
\label{s:hyperb}

In this section we prove Theorem~\ref{t:hyperb}. Let us indicate that the proof uses
only $\delta$-hyperbolicity, Gromov's Morse Lemma for Gromov hyperbolic spaces
(see \cite{BBI} for basic definitions and the Morse Lemma),
and a very weak corollary of bounded geometry: given any $\ep$, there is an
$\ep$-net such that for every $R>0$ there is a constant $C=C (R, \ep)$  
such that every ball of radius $R$ contains at most $C$ points from the
net. 

We set $\ep=\delta$ and fix such an $\ep$-net $X$.
This is the set of vertices of our graph. 

We construct the
graph in two steps. Fix a point $p\in M$. First, we build a tree such that all 
distances in this tree from $p$ to points in $X$ are (exactly) equal to distances in 
$M$, and distances in $M$ between other pairs of points in $X$ do not exceed
distances in the tree up to an additive constant.

To achieve this, for every point $q \in X\setminus\{p\}$ we choose  a $q' \in X$ such that:
\begin{enumerate}
\item $q'$ lies within distance $\ep$ from the geodesic segment $[pq]$;
\item $ d(q, p) - 15 \ep <d(q', p) < d(q, p) - 5 \ep$ if $d(p,q)>5\ep$;
\item if $d(q, p)\le 5\ep$, then $q'=p$.
\end{enumerate}
The existence of such $q'$ follows simply from the triangle inequality and
the definition of $\ep$-nets.
Note that we choose just one point $q'$ (a ``parent'') for every $q$ in~$X$.

We connect every $q\in X$ to its ``parent'' $q'$ by an edge
and set the length of this edge to be $d(q, p)-d(q',p)$.
The resulting graph $T$ is a tree.
We denote the distance in $T$ by $d_T$.
By construction, $d_T(p,q)=d(p,q)$ for every $q\in X$.
Moreover
$
 d_T(q_1,q_2) = d(q_1,p)-d(q_2,p)
$
for any $q_1,q_2\in X$ such that
$q_2$ lies on the $T$-path from $q_1$ to $p$.

Note that since each point $q'$ is connected to points which are no further 
than $100\ep$ away, the degree of vertices in this tree is uniformly
bounded due to the bounded geometry assumption.

Let us make an important though obvious observation here. For a 
shortest path from $p$ to $q$
in the tree,  consider a broken geodesic line obtained by joining adjacent 
vertices along this 
path by shortest segments in $M$. These broken lines are quasi-geodesics with the 
same quasi-geodesic constant (say, 10) and hence by the Morse Lemma there is a 
constant $D$ such that every shortest path from $q$ to $p$ in the tree stays 
within the $D$-neighborhood of a shortest path in $M$. This implies that 
for any $q_1, q_2\in X$ such that $q_2$ lies on the $T$-path between $q_1$ and $p$,
we have $|d_T(q_1,q_2)-d(q_1,q_2)|\le 2D$.

Now for any two $q_1, q_2\in X$ the $T$-path from $q_1$ to $q_2$
contains a point $q$ which lies on both $T$-paths connecting
$q_1$ and $q_2$ to $p$.
Hence
$$
 d_T(q_1,q_2) = d_T(q_1,q)+d_T(q_2,q) \ge d(q_1,q)+d(q_2,q)-4D \ge d(q_1,q_2)-4D
$$
by the triangle inequality.
Thus the distance between $q_1$ and $q_2$ in $M$ 
cannot exceed that in the tree by more than an additive constant.

For any two $q_1, q_2\in X$, by (one of the definitions
of) $\delta$-hyperbolicity there is a point $p'$ on the $M$-geodesic 
$[q_1,q_2]$ such that the distance from $p'$ to geodesic segments
connecting $q_1$ and $q_2$ to $p$ is less than $\delta$. Thus there are 
points $q_1',q_2'\in X$ such that $q_i'$ lies between $q_i$ and $p$ in $T$ 
and $d(q_i',p) < D_1:=D+100\ep+\delta$ for $i=1,2$.
 
Now we are prepared for Step 2.
We connect by a new edge every pair of points $x,y\in X$
such that $d(x,y)<2D_1$.
By our bounded geometry assumption,
the degrees of  vertices in the graph remain uniformly bounded.   
We set the length of each of the new edges to be $2D_1+4D$.
This guarantees that the distances in the new graph still cannot
be shorter than those in $M$ by more than $4D$.
Now for $q_1,q_2\in X$ consider the points $q_1'$ and $q_2'$
constructed in the previous paragraph. They are connected by
a new edge, hence the distance between $q_1$ and $q_2$ in the new graph 
is bounded above by
$$
 d_T(q_1,q_1')+d_T(q_2,q_2') + 2D_1+4D \le  d(q_1,q_1')+d(q_2,q_2') + 2D_1+12D .
$$
Since $q_1'$ and $q_2'$ lie within distance $\delta$ from a point $p'$ on
the geodesic $[q_1q_2]$, we have $d(q_1,q_1')+d(q_2,q_2')\le d(q_1,q_2)+2\delta$.
Thus that distances in the graph and in $M$ differ by no more than
by an additive constant, namely by $2D_1+12D+2\delta$. 
This finishes the proof of Theorem~\ref{t:hyperb}.

\begin{remark}
\label{r:hyperb-integer}
The construction can be modified in a trivial way to assign integral lengths to all edges. 
First, one sets $\ep$ and $\delta$ larger than say 10. Next, 
at Step 1, we set the length of the edge from $q$ to $q'$ to be $[d(q, p)]-[d(q',p)]$
where $[\cdot]$ denotes the integral part.
At Step 2, we take any integer greater than $2D_1+4D$
and assign this length to all new edges. It is easy to check that the argument goes
through exactly the same way. 
\end{remark}

\end{document}